\documentclass[11pt,reqno]{amsart}

\usepackage{amssymb}
\usepackage{amsthm}
\usepackage{latexsym}
\usepackage{amsfonts}
\usepackage{mathrsfs}
\usepackage{bm}
\usepackage{titlesec}
\usepackage{color}
\usepackage{hyperref}

\newtheorem{thm}{Theorem}[section]
\newtheorem{cor}[thm]{Corollary}
\newtheorem{lem}[thm]{Lemma}
\newtheorem{prop}[thm]{Proposition}

\theoremstyle{definition}
\newtheorem{defn}[thm]{Definition}
\newtheorem{rem}[thm]{Remark}
\newtheorem{ex}[thm]{Example}

\titleformat{\section}{\normalfont\bfseries\centering}{\thesection.}{.25em}{}
\titleformat{\subsection}{\normalfont\bfseries}{\thesubsection.}{.25em}{}
\titlespacing{\section}{0pt}{*3}{*1.5}
\titlespacing{\subsection}{0pt}{*4}{*0.5}
\numberwithin{equation}{section}
\setlength{\oddsidemargin}{1cm}
\setlength{\evensidemargin}{1cm}
\setlength{\textwidth}{147mm}
\allowdisplaybreaks

%
%


\renewcommand{\emptyset}{\varnothing}

\newcommand{\R}{\ensuremath{\mathbb R}}    
\newcommand{\C}{\ensuremath{\mathbb C}}    
\newcommand{\N}{\ensuremath{\mathbb N}}    
\newcommand{\K}{\ensuremath{\mathbb K}}    




\newcommand{\calL}{\mathcal L}         
\newcommand{\calM}{\mathcal M}

\newcommand{\calS}{\mathcal S}

\newcommand{\calW}{\mathcal W}         
\newcommand{\calX}{\mathcal X}


\newcommand{\la}{\lambda}
\newcommand{\veps}{\varepsilon}



\newcommand{\linspan}{\operatorname{span}}

\newcommand{\mul}[1]{M(#1)}
\newcommand{\ran}[1]{R(#1)}
\newcommand{\dom}[1]{D(#1)}



\newcommand{\Lra}{\Longrightarrow}
\newcommand{\Sra}{\Rightarrow}

\newcommand{\ol}{\overline}
\newcommand{\ds}{\dotplus}



\newcommand{\rank}{\operatorname{rank}}
\newcommand{\diag}{\operatorname{diag}}

\newcommand{\rmref}[1]{{\rm\ref{#1}}}

\newcommand{\ra}{\rightarrow}

\DeclareMathOperator{\rev}{rev}

\begin{document}
\title[Finite Rank Perturbations of Linear Relations and
Matrix Pencils]{Finite Rank Perturbations of Linear Relations and
Matrix Pencils}

\author[L.\ Leben]{Leslie Leben}
\address{{\bf L. Leben:} Institut f\"ur Mathematik, Technische Universit\"at Ilmenau, Postfach 10 05 65, 98684 Ilmenau, Germany}
\email{leslie.leben@gmx.de}

\author[F.\ Mart\'{\i}nez Per\'{\i}a]{Francisco Mart\'{\i}nez Per\'{\i}a}
\address{{\bf F.\ Mart\'{\i}nez Per\'{\i}a:} 
Centro de Matem\'atica de La Plata -- Facultad de Ciencias Exactas, Universidad Nacional de La Plata, C.C.\ 172, (1900) La Plata, Argentina \\ and Instituto Argentino de Matem\'{a}tica "Alberto P. Calder\'{o}n" (CONICET), Saavedra 15 (1083) Buenos Aires, Argentina }
\email{francisco@mate.unlp.edu.ar}

\author[F.\ Philipp]{Friedrich Philipp}
\address{{\bf F. Philipp:} Institut f\"ur Mathematik, Technische Universit\"at Ilmenau, Postfach 10 05 65, 98684 Ilmenau, Germany}
\email{friedrich.philipp@tu-ilmenau.de}
\urladdr{www.tu-ilmenau.de/analysis/team/friedrich-philipp/}

\author[C.\ Trunk]{Carsten Trunk}
\address{{\bf C. Trunk:} Institut f\"ur Mathematik, Technische Universit\"at Ilmenau, Postfach 10 05 65, 98684 Ilmenau, Germany\\ and Instituto Argentino de Matem\'{a}tica "Alberto P. Calder\'{o}n" (CONICET), Saavedra 15 (1083) Buenos Aires, Argentina}
\email{carsten.trunk@tu-ilmenau.de}
\urladdr{www.tu-ilmenau.de/analysis/team/carsten-trunk/}

\author[H.\ Winkler]{Henrik Winkler}
\address{{\bf H. Winkler:} Institut f\"ur Mathematik, Technische Universit\"at Ilmenau, Postfach 10 05 65, 98684 Ilmenau, Germany}
\email{henrik.winkler@tu-ilmenau.de}
\urladdr{www.tu-ilmenau.de/analysis/team/henrik-winkler/}

\thanks{L.\ Leben, F.\ Mart\'{\i}nez Per\'{\i}a, and C.\ Trunk  gratefully acknowledge the support of the DAAD from funds of the German Bundesministerium f\"{u}r Bildung und Forschung (BMBF), Projekt-ID: 57130286. F.\ Mart\'{\i}nez Per\'{\i}a, and C.\ Trunk  gratefully acknowledge the support of the DFG
(Deutsche Forschungsgemeinschaft)  from the project TR 903/21-1.
 In addition, F.\ Mart\'{\i}nez Per\'{\i}a gratefully acknowledges the support from the grant  PIP CONICET 0168. L.\ Leben gratefully acknowledges the support from Carl-Zeiss-Stiftung. F. Philipp gratefully thanks MinCyT Argentina for their support under grant PICT-2014-1480 and the Carl-Zeiss-Stiftung for supporting him within the project {\it DeepTurb - Deep Learning in and of Turbulence}.}

\dedicatory{Dedicated to Henk de Snoo on the occasion of his 75$^{\rm th}$ birthday}

\begin{abstract}
We elaborate on the deviation of the Jordan structures of two linear relations that are finite-dimensi\-o\-nal perturbations of each other. We compare their number of Jordan chains of length at least $n$. In the operator case, it was recently proved that the difference of these numbers is independent of $n$ and is at most the defect between the operators. One of the main results of this paper shows that in the case of linear relations this number has to be multiplied by $n+1$ and that this bound is sharp. The reason for this behavior is the existence of singular chains.

We apply our results to one-dimensional perturbations of singular and regular matrix pencils. This is done by representing matrix pencils via linear relations. This technique allows for both proving known results for regular pencils as well as new results for singular ones.
\end{abstract}



\maketitle


\section[\hspace*{1cm}Introduction]{Introduction}

Given a pair of matrices $E, F\in \mathbb{C}^{d\times d}$, the associated matrix pencil is defined by
\begin{equation}\label{pencil177}
P(s):= sE-F.
\end{equation}
The theory of matrix pencils occupies an increasingly important place in linear algebra, due to its numerous applications. For instance, they appear in a natural way in the study of differential-algebraic equations of the form:
\begin{equation}\label{DAE}
E\dot{x}=Fx, \ \ \ x(0)=x_0,
\end{equation}
which are a generalization of the abstract Cauchy problem, see e.g.\ \cite[Chapter 12, \S 7]{G59}.
Substituting $x(t)=x_0e^{st}$ into \eqref{DAE} leads to
\[
(sE - F)x_0 = 0.
\]
Hence, solutions of the above eigenvalue equation for the matrix pencil \eqref{pencil177}
correspond to solutions of the Cauchy problem \eqref{DAE}.

The matrix pencil $P$ is called regular if $\det(sE - F)$ is not identically zero, and it is called singular otherwise. Perturbation theory for regular matrix pencils $P(s):= sE-F$ is a well
developed field, we mention here only \cite{DMT08,G,MMRR09,T80} which is a short list of papers devoted to this subject. 
As an example, we describe a well-known result. Recall that for a matrix pencil $P$ as in \eqref{pencil177}, an ordered family of vectors $(x_n, \ldots ,x_{0})$ is a Jordan chain of length $n+1$ at
$\lambda\in\mathbb C$ if $x_{0}\neq 0$ and
\begin{equation*}
 (F -\la E)x_0=0,\quad (F-\lambda E)x_1 = E x_0,\quad \ldots, \quad (F-\lambda E)x_n = E x_{n-1}.
\end{equation*}
Denote by $\mathcal L_{\lambda}^l(P)$ the subspace spanned by the elements of all Jordan chains up to length $l$ at
the eigenvalue $\lambda \in \mathbb C$. If $l=0$ or if $\la$ is not an eigenvalue of $P$ we define $\mathcal L_{\lambda}^l(P)=\{0\}$.
If $P(s)$ is regular and if  $Q(s)$ is a rank-one pencil such that $(P+Q)(s)$ is also regular then for  $n\in\N\cup\{0\}$
the following inequality holds:
	\begin{align*}
	\left|\dim\frac{\mathcal{L}_{\lambda}^{n+1}(P+Q)}{\mathcal{L}_{\lambda}^n(P+Q)}
-\dim\frac{\mathcal{L}_{\lambda}^{n+1}(P)}{\mathcal{L}_{\lambda}^{n}(P)}\right|\leq 1.
	\end{align*}
In this form it can be found in \cite{G}, but it is mainly due to \cite{DMT08} and \cite{T80}.
The proof of this inequality, as many other results concerning perturbation theory for regular matrix pencils, is based on a detailed analysis of the determinant.

Perturbation theory for singular matrix pencils is studied only in a few papers so far. Roughly speaking, it started with
the investigation of the Kronecker canonical form of a fixed singular matrix pencil $P$ under low rank perturbations in \cite{DD07}. There, the generic change in the Kronecker canonical form of a singular pencil under
low-rank perturbations resulting again in a singular pencil
is considered. In this case the term generic
refers to the fact that the perturbations are from an open dense subset of the set of  pencils with fixed sizes and rank, cf.\ \cite[Theorem 3.1]{DD07}.
In \cite{HMP,MMW17} the effect of generic regularizing perturbations was considered, i.e.\
perturbations whose rank is exactly the difference of full rank and the rank of a singular pencil.
While the focus in \cite{MMW17} is on symmetric rank-one
perturbations, \cite{HMP} contains the general
low-rank case. In \cite{MMW15} the rank-one distance to singularity  as the smallest norm of a rank-one perturbation that makes a given pencil singular is expressed as a quadratic constrained optimization problem.

Finally, we would like to mention that in a recent manuscript \cite{BR20} the authors characterize the Kronecker structure of a matrix pencil obtained by a rank-one perturbation of another matrix pencil in terms of the homogenous invariant
factors and the row and column minimal indices of the original and the perturbed pencil via transforming
it in a matrix pencil completion problem.

Here we develop a different approach to treat finite rank perturbations of singular
matrix pencils. This is done by representing matrix pencils
via 
linear relations, see also \cite{BennByer01,BB06,BTW}.
The classical philosophy to treat linear multi-valued mappings or relations was just to concentrate on the operator part and getting rid of the multi-valued part by projection.
At this place one has to mention the particular contributions of Henk de Snoo to linear relations, who started, together with many coauthors, a seminal work on this subject.
The publications \cite{DS74,DS87,DS87-2} are among the first where the authors treated linear relations as subspaces in product spaces.  Later on, Henk de Snoo was involved in investigations where linear relations arise in a natural way in extension and perturbation theory \cite{DHMS06,HSSW19, HSSW07, HSSW17} for many kinds of linear operators or relations, see also \cite{LSSW10,LSSW12}. Concerning his contributions to the structure of linear relations, see \cite{ss09, ssw05,ssw}. Of course, this is a non-exhaustive list of Henk de Snoo's publications about this topic.

Each matrix $E\in\mathbb C^{d\times d}$ is considered as a linear relation
via its graph, i.e.\ the subspace of $\mathbb C^d\times \mathbb C^d$ consisting of pairs of the form $\{x, Ex\}$, $x\in \mathbb C^d$. Also, the inverse $E^{-1}$ (in the sense of linear relations) of a non-necessarily invertible matrix $E$ is the subspace of $\mathbb C^d\times \mathbb C^d$ consisting of pairs of the form $\{Ex, x\}$, $x\in \mathbb C^d$.
Multiplication of linear relations is defined in analogy to
multiplication of matrices, see Section \ref{Preliminaries} for the details. Then, to a matrix pencil
$P(s)=sE-F$ we associate the linear relation $E^{-1}F$.

There exists a well developed spectral theory for linear relations, see e.g.\ \cite{Arens,c,ssw05}.
An eigenvector at $\lambda \in \C$ of $E^{-1}F$ is a tuple of the form $\{x,\lambda x\} \in E^{-1}F$, $x\ne 0$. Jordan chains are defined in a similar way, see Section \ref{Linear Relations} below.

In Section \ref{Nicki} we show that (point) spectrum and Jordan chains of
$E^{-1}F$ coincide with (point) spectrum and Jordan chains of the matrix pencil $P$ in \eqref{pencil177}, respectively.
This is the key to translate spectral properties of a matrix pencil to its associated linear relation
and vice versa. The advantage of this approach is that it is applicable not only to regular matrix pencils, but  also to singular matrix pencils.

Given a matrix pencil $P$ as in \eqref{pencil177}, we consider
one-dimensional perturbations of the form
\begin{equation*}
 Q(s)=w(su^*-v^*), 
\end{equation*}
where $u,v,w\in \mathbb C^d$, $(u,v) \ne (0,0)$ and $w\neq 0$.
Then $P$ and $P+Q$ are rank-one perturbations of each other, which means that they differ by a rank-one matrix polynomial.
Recall that the rank of a matrix pencil $P$ is the largest
$r\in\mathbb{N}$ such that $P$, viewed as a matrix with polynomial entries, has minors of size $r$  that are not identically zero   \cite{DMT08,G59}.
As described above, to the matrix pencils  $P$ and $P+Q$
there correspond the linear relations $E^{-1}F$ and $\left(E+wu^*\right)^{-1}(F+wv^*)$, respectively,
which turn out to be one-dimensional perturbations of each other, see Section \ref{Osterei}.
Then, the main result of this paper is Theorem \ref{grosserWurf} below.
It consists of the following perturbation estimates for
singular (and regular) matrix pencils:
\begin{itemize}
\item[\rm (i)]
 If $P$ is regular but $P+Q$ is singular, then 
	\begin{align*}
-1-n \leq  \dim\frac{\mathcal{L}_{\lambda}^{n+1}(P+Q)}{\mathcal{L}_{\lambda}^n(P+Q)}-
\dim\frac{\mathcal{L}_{\lambda}^{n+1}(P)}{\mathcal{L}_{\lambda}^{n}(P)}
\leq 1.
	\end{align*}
\item[\rm (ii)]
 If $P$ is singular and $P+Q$ is regular, then 
	\begin{align*}
-1 \leq  \dim\frac{\mathcal{L}_{\lambda}^{n+1}(P+Q)}{\mathcal{L}_{\lambda}^n(P+Q)}-
\dim\frac{\mathcal{L}_{\lambda}^{n+1}(P)}{\mathcal{L}_{\lambda}^{n}(P)}
\leq n+1.
	\end{align*}
\item[\rm (iii)]
 If both $P$ and $P+Q$ are singular, then 
	\begin{align*}
\left|\dim\frac{\mathcal{L}_{\lambda}^{n+1}(P+Q)}{\mathcal{L}_{\lambda}^n(P+Q)}-
\dim\frac{\mathcal{L}_{\lambda}^{n+1}(P)}{\mathcal{L}_{\lambda}^{n}(P)}\right|
\leq n+1.
	\end{align*}
\end{itemize}
Later, in Section \ref{KCF}, we explain how to interpret this result in terms of the Kronecker invariants associated to the Kronecker canonical forms of the matrix pencils $P$ and $P+Q$.

Theorem \ref{grosserWurf} follows from the corresponding result for
one-dimensional perturbations of linear relations, which is
the second main result of this paper. It is the content of
Sections \ref{Linear Relations} and \ref{Osterei}, which is of independent interest.
 More precisely,
given linear relations $A$ and $B$ in a linear space $X$ which are one-dimensional perturbations of each other, we show  that $N(A^{n+1})/N(A^n)$ is finite-dimensional if and only if $N(B^{n+1})/N(B^n)$ is finite-dimensional and, in this case,
\begin{equation}\label{abschaetzung2222}
\left|\dim\frac{N(B^{n+1})}{N(B^n)} - \dim\frac{N(A^{n+1})}{N(A^n)}\right|\,\le\,n+1.
\end{equation}
Here $N(A)$ denotes the kernel of the linear relation $A$, that is, the set of all $x\in X$
such that $\{x,0\}\in A$. If, in addition, $A\subset B$ or $B\subset A$, we show that the left-hand side in
\eqref{abschaetzung2222} is bounded by $n$.
However, in Section \ref{s:sharpness} we show that the bound in \eqref{abschaetzung2222}
is sharp. It is worth mentioning that if $A$ and $B$ are linear operators in $X$ the left-hand side in \eqref{abschaetzung2222} is bounded by $1$, see \cite{blmt}.

In Section \ref{Osterhase} we extend the above result to $p$-dimensional perturbations.
In this case, we show that the left-hand side in
\eqref{abschaetzung2222} is bounded by $(n+1)p$.
Again, this estimate improves to $np$ in case that $A\subset B$ or $B\subset A$, and to $p$ if
$A$ and $B$ are operators, cf.\ \cite{blmt}.

\section[\hspace*{1cm}Preliminaries]{Preliminaries}
\label{Preliminaries}

Throughout this paper $X$ denotes a vector space over $\K$, where $\K$ stands for the real field $\R$ or the complex field $\C$. Each subspace $W$ of $X$ determines an equivalence relation in $X$, we say that $x\in X$ is congruent to $y\in X$ if $x-y\in W$. Then, we denote by $X/W$ or $\tfrac{X}{W}$ the set of all equivalence classes of $X$ with respect to this equivalence relation. $X/W$ is also a vector space over $\K$, which is called the {\em quotient space of $X$ over $W$}, see e.g.\ \cite{N66}.

Elements (pairs) from $X\times X$ will be denoted by $\{x,y\}$, where $x,y\in X$. A {\em linear relation} in $X$ is a linear subspace of $X\times X$. Linear operators can be treated as linear relations via their graphs: each linear operator $T:\dom T\ra X$ in $X$,
where $\dom T$ stands for the domain of $T$, is identified with its graph
\[
\Gamma(T):=\left\{ \{x,Tx\}:\ x\in \dom T\right\}.
\]
For the basic notions and properties of linear relations we refer to
\cite{Arens,c,H06}. However,
we follow here  the above mentioned approach proposed in \cite{DS74,DS87,DS87-2}.

We denote the domain and the range of a linear relation $A$ in $X$ by $\dom A$ and $\ran A$, respectively,
\[
 \dom A = \left\{ x\in X \;:\; \exists\, y:\ \{x,y\} \in A \right\} \quad \mbox{and} \quad
 R(A)=\left\{ y\in X \;:\; \exists\, x:\ \{x,y\} \in A \right\}.
\]
Furthermore, $N(A)$ and $\mul A$ denote the \textit{kernel} and the
\textit{multivalued} part of $A$,
\[
 N(A)=\left\{ x\in X \;:\; \{x,0\} \in A \right\} \quad \mbox{and} \quad
\mul A=\left\{y\in X \;:\; \{0,y\}\in A \right\}.
\]
Obviously, a linear relation $A$ is the graph of an operator if and only if $\mul
A=\{0\}$. The inverse $A^{-1}$ of a linear relation $A$ always exists and is given by
\begin{equation}\label{Dammtor}
A^{-1} =\left\{ \{y,x\}\in X \times X  \;:\;  \{x,y\}
\in A \right\}.
\end{equation}

We recall that the product of two linear relations $A$ and $B$ in $X$ is defined as
\begin{equation*}
AB = \left\{ \{x, z\}\;:\;  \{ y, z\} \in A \mbox{ and } \{ x,y\} \in B \;\mbox{ for some } y \in X\right\}.
\end{equation*}
As for operators the product of linear relations is an associative operation. We denote
$A^0 := I$, where $I$ denotes the identity operator in $X$, and for $n=1,2,\ldots$ the $n$-th power of $A$ is defined recursively by
\begin{equation*}
A^n := AA^{n-1}.
\end{equation*}
Thus, we have $\{x_n, x_0\}\in A^n$ if and only if there exist $x_1,\ldots,x_{n-1}\in X$ such that
\begin{equation}\label{Minaj}
\{x_n, x_{n-1}\}, \{x_{n-1}, x_{n-2}\}, \ldots, \{x_1, x_0\}\in A.
\end{equation}
In this case, \eqref{Minaj} is called a {\it chain of} $A$. We also use the
shorter notation $(x_n,\ldots,x_0)$.

For a linear relation $T$ in $X$ and $m\in\N$, consider the vector space of $m$-tuples of elements in $T$:
$$
T^{(m)} := \underbrace{T\times T\times\dots\times T}_{\text{$m$ times}}\ ,
$$
and also the space of $m$-tuples of elements in $T$ which are chains of $T$:
\begin{align}\label{chains}
\calS_m^T := \big\{\left(\{x_{m},x_{m-1}\},\ldots,\{x_1,x_0\}\right) : (x_{m},x_{m-1},\ldots,x_0)\text{ is a chain of $T$}\big\}.
\end{align}
Clearly, $\calS_m^T$ is a subspace of $T^{(m)}$.

\begin{lem}\label{l:villa_crespo}
Let $A$ and $C$ be linear relations in $X$ such that  $C\subset A$ and $\dim(A/C) = 1$. Then for each $m\in\N$ the following inequality holds:
\begin{equation}\label{e:chain_diff}
\dim(\calS_m^A/\calS_m^C)\,\le\,m.
\end{equation}
\end{lem}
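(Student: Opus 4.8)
The plan is to realize the quotient $\calS_m^A/\calS_m^C$ as a subspace of $\K^m$, which immediately yields the bound $m$. The crucial input is the hypothesis $\dim(A/C)=1$: it provides the canonical quotient map $q\colon A\to A/C$, a surjective linear map with kernel $C$ and one-dimensional range. The idea is to use $q$ to ``measure'', link by link, how far a chain of $A$ sticks out of $C$.

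Concretely, I would form the $m$-fold product map $q^{(m)}\colon A^{(m)}\to (A/C)^m$, sending $(\xi_m,\ldots,\xi_1)\mapsto(q\xi_m,\ldots,q\xi_1)$. This is linear with kernel $C^{(m)}$, and restricting it to the subspace $\calS_m^A\subset A^{(m)}$ produces a linear map $\Phi\colon\calS_m^A\to (A/C)^m$. Since $\dim(A/C)=1$, the codomain satisfies $\dim (A/C)^m=m$, so that $\dim\ran{\Phi}\le m$. Note that $\Phi$ is well defined on all of $\calS_m^A$ precisely because every link of a chain in $\calS_m^A$ lies in $A$ by the definition \eqref{chains}, so that $q$ may be applied to each coordinate.

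The key step is to identify the kernel of $\Phi$. By construction $\ker{\Phi}=\calS_m^A\cap C^{(m)}$, i.e.\ those chains $(\{x_m,x_{m-1}\},\ldots,\{x_1,x_0\})$ of $A$ for which every link $\{x_i,x_{i-1}\}$ already belongs to $C$. The observation that makes the argument work---and the only place where the structure of chains enters---is that the chain condition in \eqref{chains} is nothing but the requirement that consecutive pairs share a common component; this matching is automatically inherited by any subcollection of the links. Hence a chain of $A$ all of whose links lie in $C$ is exactly a chain of $C$, and conversely every element of $\calS_m^C$ is a chain of $A$ with all links in $C$. Therefore $\ker{\Phi}=\calS_m^C$, and the first isomorphism theorem gives $\calS_m^A/\calS_m^C\cong\ran{\Phi}$, whence $\dim(\calS_m^A/\calS_m^C)=\dim\ran{\Phi}\le m$, which is the claimed inequality \eqref{e:chain_diff}.

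I do not anticipate a genuine obstacle: the argument is essentially a bookkeeping observation built on the single extra dimension in $A/C$. The only point requiring care is the kernel identification $\ker{\Phi}=\calS_m^C$, where one must verify both inclusions and, in particular, that the chain (component-matching) condition is not disturbed by passing to $C$; this is immediate from the definition of a chain but should be stated explicitly.
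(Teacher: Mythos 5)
Your proof is correct and follows essentially the same route as the paper: both arguments rest on the identification $\calS_m^C=\calS_m^A\cap C^{(m)}$ and then bound $\dim\bigl(\calS_m^A/(\calS_m^A\cap C^{(m)})\bigr)$ by $\dim\bigl(A^{(m)}/C^{(m)}\bigr)=m$. The only difference is packaging: the paper cites an auxiliary dimension inequality from \cite{abpt}, whereas you make the underlying map explicit (the restriction of $q^{(m)}$ to $\calS_m^A$) and invoke the first isomorphism theorem, which yields a self-contained version of the same argument.
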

\begin{proof}
We make use of Lemma 2.2 in \cite{abpt} which states that whenever $M_0,N_0,M_1,N_1$ are subspaces of a linear space $\calX$ such that $M_0\subset M_1$ and $N_0\subset N_1$, then
$$
\dim\frac{M_1\cap N_1}{M_0\cap N_0}\,\le\,\dim\frac{M_1}{M_0} + \dim\frac{N_1}{N_0}.
$$
With this lemma the proof of \eqref{e:chain_diff} is straightforward. Indeed, since $\calS_m^C = \calS_m^A\cap C^{(m)}$, we obtain from the lemma and from $\dim(A/C) = 1$ that
$$
\dim(\calS_m^A/\calS_m^C) = \dim\,\frac{\calS_m^A\cap A^{(m)}}{\calS_m^A\cap C^{(m)}}\,\le\,\dim(A^{(m)}/C^{(m)}) = m,
$$
which is \eqref{e:chain_diff}.
\end{proof}

\medskip
For relations $A$ and $B$ in $X$ the {\em operator-like sum} $A+B$ is the relation defined by
\[
A+B = \left\{ \{ x,y+z\} \;:\;  \{x,y\} \in A,  \{x,z\} \in B \right\}.
\]
The notions of eigenvalue, root manifolds and point spectrum also apply to linear relations. Given $\lambda\in\C$, $A-\lambda$ stands for the linear relation $A-\lambda I$:
\begin{align*}
A-\lambda = \left\{ \{x,y-\lambda x\} \ :\  \{x,y\}\in A \right\}.
\end{align*}
Then, $\lambda\in\C$ is an \emph{eigenvalue of} $A$ if $N(A-\lambda)\neq\{0\}$. On the other hand, we say that $A$ has an \emph{eigenvalue at} $\infty$ if $\mul{A}\neq\{0\}$. The point spectrum of $A$ is the set $\sigma_p(A)$ consisting of the eigenvalues $\lambda\in\C\cup\{\infty\}$ of $A$.

A chain $(x_n,\ldots,x_0)$ of $A$ is called a {\it quasi-Jordan chain of} $A$ {\it
at zero} (or simply a {\it quasi-Jordan chain of $A$} if $x_0\in N(A)$. If  $(x_n,\ldots,x_0)$ is a
quasi-Jordan chain of $A$, then $x_j \in N(A^{j+1})$ for $j=0, \ldots, n$.
If, in addition, $x_n\in M(A)$ and $(x_n,\ldots,x_0)\neq (0,\ldots,0)$, then the chain is called a {\em singular chain} of $A$.
The tuple $(x_n,\ldots,x_0)$ is called a {\it quasi-Jordan chain of $A$ at $\lambda\in\C$}, if $(x_n,\ldots,x_0)$ is a quasi-Jordan chain of the linear relation $A-\lambda$. The tuple $(x_n,\ldots,x_0)$ is called a quasi-Jordan chain of $A$ at $\infty$, if $(x_n,\ldots,x_0)$ is a quasi-Jordan chain at zero of $A^{-1}$.
Note that we admit linear dependence (and even zeros) within the elements of a quasi-Jordan chain.

\medskip
We reserve the notion of a Jordan chain of a linear relation for a particular situation which is discussed in the next section.

\section[\hspace*{1cm}
Linear independence of Jordan chains]{Linear independence of quasi-Jordan chains}\label{Linear Relations}

In what follows only quasi-Jordan chains at zero are considered, so we call them simply quasi-Jordan chains.
Assume that $T$ is a linear operator in $X$ and consider $x_0,\ldots,x_n\in \dom T$ such that
$$
Tx_0 =0 \quad \mbox{and} \quad Tx_j=x_{j-1}, \mbox{ for all } 1 \leq j\leq n.
$$
Then $\{x_n, x_{n-1}\}, \{x_{n-1}, x_{n-2}\}, \ldots, \{x_0, 0\}\in \Gamma(T)$. So, if we consider $T$
also as a linear relation via its graph, $(x_n,\ldots,x_0)$ is a quasi-Jordan chain of $T$.

As $T$ is a linear operator, it is well-known that the following facts are equivalent:
\begin{itemize}
\item[\rm (i)] $x_0 \ne 0$.
\item[\rm (ii)] The set of vectors $\{x_n,\ldots,x_0\}$ is linearly independent in $X$.
\item[\rm (iii)] $[x_n]\ne 0$, where $[x_n]$ is the equivalence class in
$N(T^{n+1})/N(T^n)$.
\item[\rm (iv)] $[x_j]\ne 0$ for all $1\leq j \leq n$, where $[x_j]$ is the equivalence class in
$N(T^{j+1})/N(T^j)$.
\end{itemize}
Therefore, if $T$ is a linear operator and $x_0\neq 0$, $(x_n,\ldots,x_0)$ is a quasi-Jordan chain of the linear relation $\Gamma(T)$ if and only if it is a Jordan chain at zero of the linear operator $T$ in the usual sense.

\medskip

However, the four statements above are no longer equivalent for
linear relations which contain singular chains, see the following example.

\begin{ex}
Let $x_0$ and $x_1$ be two linearly independent elements of $X$ and let
$$
A:= \mbox{span}\,\left\{ \{0,x_0\}, \{x_0,0\}, \{x_1,x_0\}\right\}.
$$
Then $x_0\neq 0$ but $(0,x_0)$ is a quasi-Jordan chain with linear dependent entries, hence
the equivalence of (i) and (ii) from above does not hold.

Moreover, $(x_1,x_0)$ is a quasi-Jordan chain with linearly independent entries.
But, as  $\{x_1,x_0\}$ and $\{0,x_0\}$ are both elements of $A$, due to linearity, also
$\{x_1,0\}$ is an element of $A$ and, hence, $[x_1]=0$ in $N(A^2)/N(A)$, i.e.\ (iii) is not satisfied. Therefore, conditions (ii) and (iii) are neither equivalent for linear relations.
\end{ex}

As it was mentioned before, the situation shown in the example is a consequence of the existence of singular chains in the relation $A$, or equivalently, the presence of vectors in the intersection of the kernel of $A$ and the multivalued part of $A^n$ for some $n\in\N$.
For arbitrary linear relations we have the following equivalence.

\begin{prop}\label{Francisquito}
Let $A$ be a linear relation in $X$ and $(x_n,\ldots,x_0)$ be a quasi-Jordan chain of $A$. Then the following statements are equivalent:
\begin{itemize}
\item[\rm (i)] $x_0 \notin M(A^n)$.
\item[\rm (ii)] $[x_n]\ne 0$, where $[x_n]$ is the equivalence class in
$N(A^{n+1})/N(A^n)$.
\item[\rm (iii)] $[x_j]\ne 0$ for all $1\leq j \leq n$, where $[x_j]$ is the equivalence class in
$N(A^{j+1})/N(A^j)$.
\end{itemize}
In particular, if any of the three equivalent statements holds, then the vectors
$x_0, \ldots, x_n$ are linearly independent in $X$.
\end{prop}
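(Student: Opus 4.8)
The plan is to prove the chain of equivalences (i)$\Rightarrow$(ii)$\Rightarrow$(iii)$\Rightarrow$(i), where throughout I exploit the defining relations of a quasi-Jordan chain, namely $\{x_j,x_{j-1}\}\in A$ for $1\le j\le n$ and $\{x_0,0\}\in A$, which give the chain memberships $\{x_j,x_{j-k}\}\in A^k$ and in particular $x_j\in N(A^{j+1})$. The key technical device is the following characterization: for a quasi-Jordan chain, the equivalence class $[x_j]$ in $N(A^{j+1})/N(A^j)$ is zero precisely when $x_j\in N(A^j)$, and I claim that this happens if and only if $x_0\in M(A^j)$. Establishing this last biconditional is the heart of the argument and I expect it to be the main obstacle.

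First I would prove the implication (iii)$\Rightarrow$(ii) trivially (it is the special case $j=n$) and reduce the whole proposition to showing the equivalence of (i) with the statement $x_n\notin N(A^n)$, together with a monotonicity statement that lets me pass between different values of $j$. For the core lemma, suppose $x_j\in N(A^j)$. Since also $\{x_j,x_{j-1}\}\in A$, by definition of the product there is a witness allowing us to write $\{x_j,0\}\in A^j$ and $\{x_j,x_0\}\in A^j$ (the latter via the chain $(x_j,\ldots,x_0)$), so by linearity $\{0,x_0\}\in A^j$, i.e. $x_0\in M(A^j)$. Conversely, if $x_0\in M(A^j)$, then $\{0,x_0\}\in A^j$; combining with $\{x_j,x_0\}\in A^j$ and subtracting yields $\{x_j,0\}\in A^j$, that is, $x_j\in N(A^j)$. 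This symmetric manipulation, using that $A^j$ is a linear relation closed under the operator-like sum, is exactly where the multivalued part enters, and it is why the operator case (where $M(A^j)=\{0\}$) degenerates to the classical equivalences listed before the proposition.

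With the core lemma in hand, the equivalence (i)$\Leftrightarrow$(ii) is immediate upon setting $j=n$: condition (ii), $[x_n]\ne 0$, means $x_n\notin N(A^n)$, which by the lemma is equivalent to $x_0\notin M(A^n)$, which is condition (i). To obtain (iii), I would observe that $M(A^j)\subset M(A^{j+1})$ for all $j$ (since $\{0,y\}\in A^j$ and $\{0,0\}\in A$ give $\{0,y\}\in A^{j+1}$), so the condition $x_0\notin M(A^n)$ forces $x_0\notin M(A^j)$ for every $1\le j\le n$; applying the lemma at each such $j$ gives $x_j\notin N(A^j)$, i.e. $[x_j]\ne 0$, which is (iii). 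Since (iii) plainly implies (ii), the cycle closes.

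Finally, for the concluding linear-independence assertion, I would argue by contradiction: assume the equivalent conditions hold but $\sum_{j=0}^n c_j x_j=0$ is a nontrivial relation, and let $k$ be the largest index with $c_k\ne 0$. Applying the relation $A^k$ to this equation and using $\{x_j,x_{j-k}\}\in A^k$ for $j\ge k$ while the lower terms $x_j$ with $j<k$ land in $N(A^k)$ (so contribute elements whose image under $A^k$ contains $0$), one extracts $\{0,c_k x_0\}$ as forced into $A^k$ up to the ambiguity of the multivalued part, forcing $x_0\in M(A^k)\subset M(A^n)$ and contradicting (i). I expect this last step to require care in tracking the multivalued ambiguities, but it should follow cleanly once the core lemma is established.
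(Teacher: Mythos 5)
Your proposal is correct and takes essentially the same approach as the paper: your core lemma ($x_j\in N(A^j)$ iff $x_0\in M(A^j)$, proved by subtracting the pairs $\{x_j,x_0\}$, $\{x_j,0\}$ and $\{0,x_0\}$ within the linear relation $A^j$) is exactly the paper's chain-subtraction proof of (i)$\Leftrightarrow$(ii), and your use of the monotonicity $M(A^j)\subset M(A^n)$ and of the largest nonzero coefficient mirrors the paper's treatment of (iii) and of the linear independence claim. The only cosmetic differences are that the paper subtracts the chains link by link inside $A$ rather than as single pairs inside $A^j$, and its independence argument derives the contradiction $[x_{n_0}]=0$ against (iii) where yours derives $x_0\in M(A^k)$ against (i).
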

\begin{proof}
Since $(x_n,\ldots,x_0)$ is a quasi-Jordan chain of $A$, we have that
\begin{equation}\label{chain}
\{x_n, x_{n-1}\},\ldots,\{x_1,x_0\},\{x_0,0\}\in A.
\end{equation}
We show that (i) and (ii) are equivalent. If  $x_0\in M(A^n)$, then there exist $y_1,\ldots, y_{n-1}\in X$ such that
\[
\{0, y_{n-1}\}, \ldots, \{y_2,y_1\}, \{y_1, x_0\}\in A.
\]
Subtracting this chain from the one in \eqref{chain} we end with
\[
\{x_n, x_{n-1}-y_{n-1}\}, \ldots, \{x_2-y_2, x_1-y_1\}, \{x_1-y_1, 0\} \in A.
\]
Thus, $x_n\in N(A^n)$, or equivalently, $[x_n]=0$.
Conversely,
if $[x_n]=0$ then $x_n\in N(A^n)$. Hence, there exist $u_1,\ldots, u_{n-1}\in X$ such that
\[
\{x_n, u_{n-1}\}, \ldots, \{u_2,u_1\}, \{u_1,0\}\in A.
\]
Taking the difference of   \eqref{chain} and the chain above  we obtain
\[
\{0, x_{n-1}-u_{n-1}\}, \ldots, \{x_2-u_2,x_1-u_1\}, \{x_1-u_1, x_0\}\in A,
\]
i.e.\ $x_0\in M(A^n)$.

Now we show that (ii) and (iii) are equivalent. Obviously (iii) implies (ii).
Hence, assume $[x_n]\ne 0$. Then, by (i), $x_0 \notin M(A^n)$. But as
$M(A^j) \subset M(A^n)$ for all $1\leq j \leq n$, we have
$x_0\notin M(A^j)$ for all $1\leq j \leq n$. Applying (ii) to every $[x_j]$ we obtain (iii).

It remains to show the additional statement concerning the linear independence of the
vectors $x_0, \ldots, x_n$. This is the case if the equation
$\sum_{j=0}^n\alpha_jx_j=0$ implies that all $\alpha_j$, $j=0, \ldots, n$,
are equal to $0$. By (iii) we see that all $x_j$ are non-zero. If not all $\alpha_j$
are equal to $0$, let $n_0$ be the largest index $j$ with $\alpha_{j} \ne 0$. It follows that
$$
x_{n_0} = - \alpha_{n_0}^{-1} \sum_{j=0}^{n_0 -1}\alpha_jx_j \in N(A^{n_0}),
$$
hence $[x_{n_0}]=0$, in contradiction to (iii).
\end{proof}

The above considerations lead to the following definition of a Jordan chain for a linear relation.
\begin{defn}\label{Tunis}
Let $(x_{n},\ldots,x_{0})$ be a quasi-Jordan chain of a linear relation $A$ in $X$.
We call it a \emph{Jordan chain at zero of length} $n+1$ \emph{in} $A$ if
$$
[x_n] \ne 0 \mbox{ in } N(A^{n+1})/N(A^n).
$$
Moreover, $(x_{n},\ldots,x_{0})$ is called a \emph{Jordan chain at} $\la\in\C$ {of length} $n+1$ \emph{in} $A$ if
it is a Jordan chain at zero of $A-\la$ and a \emph{Jordan chain at} $\infty$ {of length} $n+1$ \emph{in} $A$ if
it is a Jordan chain at zero of $A^{-1}$.

\end{defn}

We remark that our Definition \ref{Tunis} is equivalent to the definition formulated in \cite{ssw05}
but different from the one used in \cite{BTW},
 where the term Jordan chain was used for an object which is here called
 quasi-Jordan chain together with the assumption
 that all elements of the quasi-Jordan chain are linearly independent.

In the sequel we will make use of the following lemma.
\begin{lem}\label{l:Encuentro}
Let $A$ be a linear relation in $X$ and let $(x_{k,n},\ldots,x_{k,0})$, $k=1,\ldots,m$, be $m$
quasi-Jordan chains of $A$. Then
\begin{equation*}
\dim\linspan\{[x_{1,n}],\ldots,[x_{m,n}]\} = \dim \frac{\calL}{\calL\cap M(A^n)},
\end{equation*}
where $\calL := \linspan\{x_{1,0},\ldots,x_{m,0}\}$.
\end{lem}
\begin{proof}
Given $m$ quasi-Jordan chains of $A$ as in the statement, consider the following linear transformations
\begin{align*}
T : \K^m\to\frac{N(A^{n+1})}{N(A^n)},\qquad & Tu := \sum_{k=1}^mu_k[x_{k,n}],\quad  u=(u_1,\ldots,u_m)\in\K^m, \ \ \text{and
}\\
S : \K^m\to N(A),\qquad  & Su := \sum_{k=1}^mu_kx_{k,0},\quad  u=(u_1,\ldots,u_m)\in\K^m.
\end{align*}
On one hand, observe that $R(T)=\linspan\{[x_{1,n}],\ldots,[x_{m,n}]\}$ and $R(S)=\calL$.
On the other hand, we have that
\begin{equation*}
N(T) =  \left\{ u\in\K^m \;:\; Su \in M(A^n) \right\}
\end{equation*}
Indeed, $u=(u_1,\ldots,u_m)\in N(T)$ if and only if $\left[\sum_{k=1}^m u_k x_{k,n}\right] = 0$ which, by Proposition \ref{Francisquito}, is equivalent to $Su=\sum_{k=1}^mu_kx_{k,0}\in M(A^n)$.

In particular,
\begin{align*}
\dim N(T) &= \dim  \left\{ u\in\K^m \;:\; Su \in M(A^n) \right\}  = \dim N(S) + \dim \calL\cap M(A^n),
\end{align*}
and the rank-nullity theorem yields
\begin{align*}
\dim\linspan\{[x_{1,n}],\ldots,[x_{m,n}]\} &= \dim R(T)= m- \dim N(T) \\ &= m-(\dim N(S) + \dim \calL\cap M(A^n)) \\
&= \dim \calL- \dim \calL\cap M(A^n)= \dim \frac{\calL}{\calL\cap M(A^n)},
\end{align*}
where we have used that $R(S)=\calL$.
\end{proof}

In the following we will study linear independence of quasi-Jordan chains.

\begin{lem}\label{l:JC_li}
Let $(x_{k,n},\ldots,x_{k,0})$, $k=1,\ldots,m$, be $m$ quasi-Jordan chains of a linear relation $A$ in $X$. Consider the following statements:
\begin{enumerate}
\item[{\rm (i)}]   The set $\{[x_{1,n}],\ldots,[x_{m,n}]\}$ is linearly independent in $N(A^{n+1})/N(A^n)$.
\item[{\rm (ii)}]  The set $\{x_{k,j} : k=1,\ldots,m, j=0,\ldots,n\}$ is linearly independent in $X$.
\item[{\rm (iii)}] The set  of pairs
$$
\{\{x_{k,j},x_{k,j-1}\} : k=1,\ldots,m, j=1,\ldots,n\}\cup\{\{x_{k,0},0\} : k=1,\ldots,m\}
$$
is linearly independent in $A$.
\end{enumerate}
Then the following implications hold: $\text{\rm (i)}\;\Lra\;\text{\rm (ii)}\;\Lra\;\text{\rm (iii)}$.
If, in addition,
\begin{align*}
\linspan\{x_{1,0},\ldots, x_{m,0}\}\cap \mul{A^n}=\{0\},
\end{align*}
holds, then the three conditions $\text{\rm (i)}$, $\text{\rm (ii)}$, and $\text{\rm (iii)}$ are equivalent.
\end{lem}
\begin{proof}
The implication (ii)$\Sra$(iii) is straightforward by use of the linear independence of the first components of the pairs in (iii).  Let us prove the implication (i)$\Sra$(ii). Assume that $\{[x_{1,n}],\ldots,[x_{m,n}]\}$ is linearly independent. Let $\alpha_{k,j}\in\K$, $j=0,\ldots,n, k=1,\ldots,m$, such that
\begin{equation}\label{e:linind}
\sum\limits_{j=0}^n \sum\limits_{k=1}^m \alpha_{k,j} x_{k,j} = 0.
\end{equation}
It is easily seen that the following tuple is a quasi-Jordan chain of $A$:
\begin{equation*}
  \left( \sum\limits_{j=0}^n\sum\limits_{k=1}^m \alpha_{k,j} x_{k,j}, \sum\limits_{j=1}^n\sum\limits_{k=1}^m \alpha_{k,j} x_{k,j-1},\ldots, \sum\limits_{j=n-1}^n\sum\limits_{k=1}^m \alpha_{k,j} x_{k,j-n+1}, \sum\limits_{k=1}^m \alpha_{k,n} x_{k,0}  \right).
\end{equation*}
From this and \eqref{e:linind} it follows that $\sum_{k=1}^m \alpha_{k,n} x_{k,0}\in\mul{A^n}$, which, by Proposition \ref{Francisquito}, implies
for equivalence classes in $N(A^{n+1})/N(A^n)$
$$
 \left[\sum\limits_{j=0}^n\sum\limits_{k=1}^m \alpha_{k,j} x_{k,j}\right] =\sum\limits_{k=1}^m \alpha_{k,n} [x_{k,n}]=0.
$$
Hence,
$\alpha_{k,n} = 0$ for $k=1,\ldots,m$ and \eqref{e:linind} reads as
\begin{equation}\label{Lindwurm}
\sum\limits_{j=0}^{n-1} \sum\limits_{k=1}^m \alpha_{k,j} x_{k,j} = 0.
\end{equation}
Now one can construct a quasi-Jordan chain as above starting with
the sum in \eqref{Lindwurm}. Repeating the above argument shows $\alpha_{k,n-1}=0$ for
$k=1,\ldots,m$. Proceeding further in this manner yields (ii), since all $\alpha_{k,j}$ in \eqref{e:linind} are equal to zero.

Now assume that $\linspan\{x_{1,0},\ldots, x_{m,0}\}\cap \mul{A^n}=\{0\}$.
By Lemma \ref{l:Encuentro},
\[
\dim \linspan \{[x_{1,n}],\ldots,[x_{m,n}]\}=\dim \linspan\{x_{1,0},\ldots,x_{m,0}\}.
\]
We have to show that in this case (iii) implies (i). But if we assume (iii), in particular we have that $\{x_{1,0},\ldots,x_{m,0}\}$ is linearly independent. Therefore, $\{[x_{1,n}],\ldots,[x_{m,n}]\}$ is also linearly independent, completing the proof.
\end{proof}

\section[\hspace*{1cm}One-dimensional perturbations]{One-dimensional perturbations}
\label{Osterei}
The following definition, taken from \cite{ABJT}, specifies the idea of a one-dimensional perturbation for linear relations.

\begin{defn}\label{laplata16}
Let $A$ and $B$ be linear relations in $X$. Then $B$ is called an {\em one-dimensional perturbation} of $A$ (and vice versa) if
\begin{equation*}
\max\left\{\dim\frac{A}{A\cap B},\,\dim\frac B{A\cap B}\right\} = 1.
\end{equation*}
In particular, $A$ is called a {\em one-dimensional extension} of $B$ if $B\subset A$ and $\dim(A/B) = 1$.
\end{defn}

The next lemma describes in which way (quasi-)Jordan chains of a one-dimensional extension $A$ of a linear relation $C$ can be linearly combined to become (quasi-)Jordan chains of $C$.
The proof is based on the following simple principle: If $M$ is a subspace of $N$ and $\dim(N/M) = 1$, then whenever $x,y\in N$, $y\notin M$, there exists some $\la\in\K$ such that $x - \la y\in M$.

\begin{lem}\label{l:shifting}
Let $A$ and $C$ be linear relations in $X$ such that  $C\subset A$ and $\dim(A/C) = 1$. If $(x_{k,n},\ldots,x_{k,0})$,
$k=1,\ldots,m$, are $m$ quasi-Jordan chains of $A$, then after a possible reordering, there exist $m-1$ quasi-Jordan chains $(y_{k,n},\ldots,y_{k,0})$, $k=1,\ldots,m-1$, of $C$  such that
\begin{equation*}
y_{k,j}\in x_{k,j} + \linspan\{x_{m,\ell} : \ell=0,\ldots,j\},\qquad k=1,\ldots,m-1,\,j=0,\ldots,n.
\end{equation*}
Moreover, if $\{[x_{1,n}],\ldots, [x_{m,n}]\}$ is linearly independent in $N(A^{n+1})/N(A^n)$ then the set $\{[y_{1,n}],\ldots, [y_{m-1,n}]\}$ is linearly independent in $N(C^{n+1})/N(C^n)$.

On the other hand, if  the set $\{x_{k,j} : k=1,\ldots,m, j=0,\ldots,n\}$ is linearly independent in $X$
then the set $\{y_{k,j} : k=1,\ldots,m-1, j=0,\ldots,n\}$ is linearly independent in $X$.
\end{lem}

\begin{proof}
For any quasi-Jordan chain $(z_n,z_{n-1},\ldots,z_0)$ of $A$ we agree to write $\hat z_j = \{z_j,z_{j-1}\}$ for $j=1,\ldots,n$ and $\hat z_0 = \{z_0,0\}$. Consider the set
\begin{align*}
J :=  \{(k,j)\in \{1,\ldots,m\}\times\{0,\ldots,n\} : \hat x_{k,j}\notin C\}.
\end{align*}
If $J=\emptyset$ then all $m$ quasi-Jordan chains are in
 $C$ and the proof is completed. Therefore, assume $J\neq\emptyset$. Set
$$
h:= \min \bigl\{j\in\{0,\ldots,n\} : (k,j)\in J\text{ for some }k\in\{1,\ldots,m\}\}.
$$
Choose some $\kappa\in\{1,\ldots,m\}$ such that $(\kappa,h)\in J$. After a reordering of the indices we can assume that $\kappa = m$.

Since $\hat x_{m,h}\notin C$, there exist $\alpha_{k,h}\in\K$, $k=1,\ldots m-1$, such that
$$
\hat x_{k,h} - \alpha_{k,h}\hat x_{m,h}\in C
$$
for $k=1,\ldots m-1$. If $h=n$, we stop here. Otherwise, there exist $\alpha_{k,h+1}\in\K$, $k=1,\ldots m-1$, such that
$$
\hat x_{k,h+1} - \alpha_{k,h}\hat x_{m,h+1} - \alpha_{k,h+1}\hat x_{m,h}\in C
$$
for $k=1,\ldots m-1$. If $h=n-1$, the process terminates. Otherwise, there  exist $\alpha_{k,h+2}\in\K$ such that
$$
\hat x_{k,h+2} - \alpha_{k,h}\hat x_{m,h+2} - \alpha_{k,h+1}\hat x_{m,h+1} - \alpha_{k,h+2}\hat x_{m,h}\in C
$$
for $k=1,\ldots m-1$. We continue with this procedure up to $n$, where in the last step we find $\alpha_{k,n}\in\K$ such that
$$
\hat x_{k,n} - \alpha_{k,h}\hat x_{m,n} - \alpha_{k,h+1}\hat x_{m,n-1} - \ldots - \alpha_{k,n-1}\hat x_{m,h+1} - \alpha_{k,n}\hat x_{m,h}\in C
$$
for $k=1,\ldots m-1$. Summarizing, we obtain numbers $\alpha_{k,j}\in\K$, $k=1,\ldots,m-1$, $j=h,\ldots,n$, such that
$$
\hat u_{k,j} := \hat x_{k,j} - \sum_{i=h}^{j}\alpha_{k,i}\,\hat x_{m,j+h-i}\;\in\;C
$$
for all $k=1,\ldots,m-1$, $j=h,\ldots,n$. We now define
\[
y_{k,j} := x_{k,j} - \sum_{i=h}^{\min\{j+h,n\}}\alpha_{k,i}\,x_{m,j+h-i},
\]
for $k=1,\ldots m-1$ and $j=0,\ldots,n$. For $0\le j < h$ (if possible, i.e., $h>0$),
$$
\hat y_{k,j} = \hat x_{k,j} - \sum_{i=h}^{\min\{j+h,n\}}\alpha_{k,i}\,\hat x_{m,j+h-i}\,\in\,C
$$
is a consequence of the definition of $h$, whereas for $j\ge h$ we also have
$$
\hat y_{k,j} = \hat u_{k,j} - \sum_{i=j+1}^{\min\{j+h,n\}}\alpha_{k,i}\,\hat x_{m,j+h-i}\,\in\,C.
$$
This shows that $(y_{k,n},\ldots,y_{k,0})$ is a quasi-Jordan chain of $C$ for each $k=1,\ldots,m-1$.
From the definition of $y_{k,j}$ we also see that $y_{k,j}\in x_{k,j} +
 \linspan\{x_{m,j},\ldots,x_{m,0}\}$ for all $j=0,\ldots, n$ and $k=1,\ldots,m-1$.

Now, assuming the linear independence of $\{[x_{1,n}],\ldots, [x_{m,n}]\}$ in $N(A^{n+1})/N(A^n)$, we prove the linear independence of $\{[y_{1,n}],\ldots, [y_{m-1,n}]\}$ in $N(C^{n+1})/N(C^n)$.
Since $y_{k,0} = x_{k,0} - \alpha_{k,h}x_{m,0}$ for $k=1,\ldots,m-1$, the linear
independence of $\{y_{1,0},\ldots,y_{m-1,0}\}$ in $X$ easily follows from that of $\{x_{1,0},\ldots,x_{m,0}\}$. Furthermore,
$$
\linspan\{y_{1,0},\ldots,y_{m-1,0}\}\cap M(C^n)\,\subset\,\linspan\{x_{1,0},\ldots,x_{m,0}\}\cap M(A^n),
$$
and the claim follows from Lemma \ref{l:Encuentro}.

Finally, assume that the set $\{x_{k,j} : k=1,\ldots,m, j=0,\ldots,n\}$ is linearly independent.
 Also, let $\beta_{k,j}\in\K$, $k=1,\ldots,m-1$, $j=0,\ldots,n$, such that $\sum_{k=1}^{m-1}\sum_{j=0}^n\beta_{k,j}y_{k,j} = 0$. Then
\begin{align*}
0
&= \sum_{k=1}^{m-1}\sum_{j=0}^n\beta_{k,j}\left(x_{k,j} - \sum_{i=h}^{\min\{j+h,n\}}\alpha_{k,i}\,x_{m,j+h-i}\right)\\
&= \sum_{k=1}^{m-1}\sum_{j=0}^n\beta_{k,j}x_{k,j} - \sum_{j=0}^n\sum_{i=h}^{\min\{j+h,n\}}\left(\sum_{k=1}^{m-1}\beta_{k,j}\alpha_{k,i}\right)x_{m,j+h-i}
\end{align*}
From this, we see that $\beta_{k,j} = 0$ for $k=1,\ldots,m-1$ and $j=0,\ldots,n$. Therefore, the set $\{y_{k,j} : k=1,\ldots,m-1, j=0,\ldots,n\}$ is linearly independent in $X$.
\end{proof}

In the main result of this section, Theorem \ref{t:LaLucila} below, we will compare the dimensions of $N(A^{n+1})/N(A^n)$ and $N(B^{n+1})/N(B^n)$ for two linear relations $A$ and $B$ that are one-dimensional perturbations of each other. To formulate it, we define the following value for two linear relations $A$ and $B$ in $X$ and $n\in\N\cup\{0\}$:
\begin{align}\nonumber
s_n(A,B) := \max\big\{\dim(\calL\cap M(A^n)) : \;&\calL \text{ is a subspace of } N(A\cap B)\cap R((A\cap B)^n),\\ \label{Def_s_n}
&\calL\cap M((A\cap B)^n) = \{0\}\big\}.
\end{align}
The quantity $s_n(A,B)$ can be interpreted as the number of (linearly independent) singular chains of $A$ of length $n$ which are not singular chains of $A\cap B$.
To justify this statement, assume that $s_n(A,B)=r$. Then, denoting $C=A\cap B$, there exists a subspace $\calL$ of $N(C)\cap R(C^n)$ such that $\dim (\calL\cap M(A^n))=r$ and $\calL\cap M(C^n) = \{0\}$. On one hand, if $\{x_{1,0},\ldots, x_{r,0}\}$ is a basis of $\calL\cap M(A^n)$, then each $x_{k,0}$, $k=1,\ldots, r$, determines a quasi-Jordan chain $(x_{k,n},\ldots, x_{k,1},x_{k,0})$ of $C$, because $\calL\subseteq N(C)\cap R(C^n)$. Also, since $\calL\cap M(C^n) = \{0\}$, Lemma \ref{l:JC_li} implies that $\{[x_{1,n}],\ldots,[x_{r,n}]\}$ is linearly independent in $N(C^{n+1})/N(C^n)$. In particular, the quasi-Jordan chains $(x_{k,n},\ldots, x_{k,1},x_{k,0})$ are not singular chains of $C$.
On the other hand, each $x_{k,0}$, $k=1,\ldots, r$, determines a singular chain of $A$ of length $n$ because $x_{k,0}\in M(A^n)\cap N(A)$.
\medskip

Note that we always have $s_0(A,B) = s_0(B,A) = 0$. On the other hand, for $n\in\N$ usually we have $s_n(A,B)\neq s_n(B,A)$. For example, if $B\subset A$ then $s_n(B,A) = 0$, while $s_n(A,B)$ might be positive. Therefore, we also introduce the number
$$
s_n[A,B] := \max\{s_n(A,B),s_n(B,A)\}.
$$
The next proposition shows that this number is bounded by $n$.

\begin{prop}\label{Thequantitys_n}
Let $A$ and $B$ be linear relations in $X$ such that $B$ is a one-dimensional perturbation of $A$. Then for $n\in\N\cup\{0\}$ we have
$$
s_n[A,B]\,\le\, n.
$$
\end{prop}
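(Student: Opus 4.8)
The plan is to reduce the whole statement to the single estimate $\dim(M(A^n)/M(C^n))\le n$, where $C:=A\cap B$, and to obtain this from Lemma \ref{l:villa_crespo}. Since $B$ is a one-dimensional perturbation of $A$ we have $\dim(A/C)\le 1$ and $\dim(B/C)\le 1$, and the admissibility conditions in the definitions of $s_n(A,B)$ and $s_n(B,A)$ coincide (both range over subspaces $\calL\subset N(C)\cap R(C^n)$ with $\calL\cap M(C^n)=\{0\}$, using $A\cap B=B\cap A$). Thus it suffices to show $\dim(\calL\cap M(A^n))\le n$ for every admissible $\calL$; the corresponding bound for $s_n(B,A)$ then follows by exchanging the roles of $A$ and $B$. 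We may also assume $\dim(A/C)=1$, for if $A=C$ then $M(A^n)=M(C^n)$ and $\calL\cap M(A^n)=\calL\cap M(C^n)=\{0\}$, so the claim is trivial.

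First I would prove $\dim(M(A^n)/M(C^n))\le n$. Let $\calS_n^{A,0}\subset\calS_n^A$ denote the subspace of those chains $(\{x_n,x_{n-1}\},\ldots,\{x_1,x_0\})$ with $x_n=0$, and define $\calS_n^{C,0}\subset\calS_n^C$ analogously. The linear map $\Phi\colon\calS_n^{A,0}\to X$ reading off the entry $x_0$ is, by the description \eqref{Minaj} of powers, surjective onto $M(A^n)$, while its restriction to $\calS_n^{C,0}$ is surjective onto $M(C^n)$. Composing $\Phi$ with the quotient map $M(A^n)\to M(A^n)/M(C^n)$ gives a surjection whose kernel contains $\calS_n^{C,0}$, so it descends to a surjection $\calS_n^{A,0}/\calS_n^{C,0}\twoheadrightarrow M(A^n)/M(C^n)$. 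Moreover $\calS_n^{C,0}=\calS_n^{A,0}\cap\calS_n^C$, so the inclusion $\calS_n^{A,0}\hookrightarrow\calS_n^A$ induces an injection $\calS_n^{A,0}/\calS_n^{C,0}\hookrightarrow\calS_n^A/\calS_n^C$. Since $C\subset A$ and $\dim(A/C)=1$, Lemma \ref{l:villa_crespo} with $m=n$ yields $\dim(\calS_n^A/\calS_n^C)\le n$. Chaining these maps gives $\dim(M(A^n)/M(C^n))\le\dim(\calS_n^{A,0}/\calS_n^{C,0})\le n$.

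Finally I would deduce the proposition. Let $\calL$ be admissible and put $V:=\calL\cap M(A^n)$. Because $M(C^n)\subset M(A^n)$, we have $V\cap M(C^n)=\calL\cap M(C^n)=\{0\}$, so the quotient map $M(A^n)\to M(A^n)/M(C^n)$ is injective on $V$; hence $\dim V\le\dim(M(A^n)/M(C^n))\le n$. Taking the maximum over all admissible $\calL$ gives $s_n(A,B)\le n$, and the same argument with $A$ and $B$ interchanged gives $s_n(B,A)\le n$, whence $s_n[A,B]\le n$. The only genuine obstacle lies in the middle step: one must set up the evaluation map $\Phi$ correctly and check that it carries $\calS_n^{C,0}$ \emph{exactly} onto $M(C^n)$, so that the induced map on quotients is well defined and surjective. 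Once $\dim(M(A^n)/M(C^n))\le n$ is in hand, the remaining arguments are routine linear algebra.
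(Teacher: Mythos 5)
Your proof is correct, and it hinges on exactly the same key tool as the paper's proof --- Lemma \ref{l:villa_crespo} applied to chains of $A$ starting at $0$ --- but it is organized differently. The paper argues by contradiction: assuming $\dim(\calL\cap M(A^n))>n$, it chooses linearly independent vectors $x_{1,0},\ldots,x_{n+1,0}\in\calL\cap M(A^n)$, forms the singular chains $X_k=(0,x_{k,n-1},\ldots,x_{k,0})$, which are linearly independent in $\calS_n^A$, and uses $\dim(\calS_n^A/\calS_n^C)\le n$ to produce a nontrivial combination $Y=\sum_k\alpha_k X_k\in\calS_n^C$; the endpoint $y_0=\sum_k\alpha_k x_{k,0}$ of $Y$ is then a nonzero vector of $\calL\cap M(C^n)$, a contradiction. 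You instead isolate the direct inequality $\dim(M(A^n)/M(C^n))\le n$, realizing $M(A^n)/M(C^n)$ as a quotient of $\calS_n^{A,0}/\calS_n^{C,0}$ and embedding the latter into $\calS_n^A/\calS_n^C$, and then observe that any subspace meeting $M(C^n)$ trivially injects into this quotient; all of these steps are valid. Your route buys a clean, reusable intermediate statement about the multivalued parts themselves, and it makes transparent something that is also true of (but hidden in) the paper's argument: only the condition $\calL\cap M(C^n)=\{0\}$ is actually used, not the full admissibility requirement $\calL\subset N(C)\cap R(C^n)$. One bookkeeping point: Lemma \ref{l:villa_crespo} is stated for $m\in\N$, so your invocation with $m=n$ covers only $n\ge 1$; for $n=0$ you should remark separately (as the paper does) that the claim is trivial, since $M(A^0)=M(I)=\{0\}$ forces $s_0[A,B]=0$.
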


\begin{proof}
The claim is clear for $n=0$. Let $n\ge 1$. It obviously suffices to prove that $s_n(A,B)\le n$. If $A\subset B$ then $s_n(A,B) = 0$ and the desired inequality holds. Hence, let us assume that $\dim(A/A\cap B) = 1$ and set $C := A\cap B$.

Let $\calL$ be a subspace of $N(C)\cap R(C^n)$ such that $\calL\cap M(C^n) = \{0\}$. Towards a contradiction, suppose that $\dim(\calL\cap M(A^n)) > n$. So, there exist linearly independent vectors $x_{1,0},\ldots,x_{n+1,0}\in\calL\cap M(A^n)$. Then there exist $n+1$ singular chains of $A$ of the form
\[
X_k=(0,x_{k,n-1},\ldots,x_{k,0}), \quad k=1,\ldots,n+1,
\]
and $\{X_1,\ldots,X_{n+1}\}$ is linearly independent in $\calS_n^A$, c.f.\ \eqref{chains}.

By Lemma \ref{l:villa_crespo}, $\dim(\calS_n^A / \calS_n^C)\leq n$. Thus, there exists a non-trivial  $Y\in\calS_n^C$ such that $Y\in\linspan\{X_1,\ldots, X_{n+1}\}$, i.e.\ there exist $\alpha_1,\ldots,\alpha_{n+1}\in\mathbb{K}$ (not all zero) such that $Y=\sum_{k=1}^{n+1} \alpha_k X_k$.

So, $Y$ is a non-trivial singular chain of $C$ of the form $Y=(0,y_{n-1},\ldots,y_0)$, where
\[
y_j=\sum_{k=1}^{n+1} \alpha_k x_{k,j}, \quad j=0,1,\ldots,n-1.
\]
In particular, $y_0=\sum_{k=1}^{n+1} \alpha_k x_{k,0}\neq 0$ because $\{x_{1,0},\ldots,x_{n+1,0}\}$ is linearly independent. Now, since $x_{1,0},\ldots,x_{n+1,0}\in\calL$, also $y_0\in\calL$ and hence $y_0\in\calL\cap M(C^n)$, which is the desired contradiction.
\end{proof}

We now present our first generalization of Theorem 2.2 in \cite{blmt}.
In this case we assume that one of the two relations is a one-dimensional extension of the other.

\begin{thm}\label{t:Harburg}
Let $A$ and $B$ be linear relations in $X$ such that $A\subset B$ and $\dim(B/A) = 1$ and let $n\in\N\cup\{0\}$. Then the following holds:
\begin{enumerate}
	\item[(i)] $N(A^{n+1})/N(A^n)$ is finite-dimensional if and only if $N(B^{n+1})/N(B^n)$ is finite-dimensional. Moreover,
\begin{equation*}
-s_{n}(B,A)\,\le\,\dim\frac{N(B^{n+1})}{N(B^n)} -\dim\frac{N(A^{n+1})}{N(A^n)}\,\le\,1.
\end{equation*}
In particular, for $n\ge 1$ we have
\begin{equation}\label{abschaetzung4}
\left|\dim\frac{N(B^{n+1})}{N(B^n)} - \dim\frac{N(A^{n+1})}{N(A^n)}\right|\,\le\,\max\{1,s_n(B,A)\}\,\le\,n.
\end{equation}
	\item[(ii)] $N(A^n)$ is finite-dimensional if and only if $N(B^n)$ is finite-dimensional. Moreover,
for $n\geq 1$,
$$
\left|\dim N(B^n) - \dim N(A^n)\right|\,\le\,\sum_{k=0}^{n-1} \max\left\{1, s_k(B,A)\right\}\,\le\,\frac{(n-1) n}{2} +1.
$$
\end{enumerate}
\end{thm}
\begin{proof}
To prove the lower bound in item (i), suppose that there are
$$
m: = \dim \frac{N(B^{n+1})}{N(B^n)} + s_n(B,A) + 1
$$
linearly independent vectors $[x_{1,n}],\ldots,[x_{m,n}]$ in $N(A^{n+1})/N(A^n)$ and consider corresponding Jordan chains $(x_{k,n},\ldots,x_{k,0})$ of length $n+1$ of $A$, $k=1,\ldots,m$. By Lemma \ref{l:Encuentro}, the vectors $x_{1,0},\ldots, x_{m,0}$ are linearly independent and, if $\calL_0 := \linspan\{x_{1,0},\ldots,x_{m,0}\}$ then
\[
\calL_0\cap M(A^n) = \{0\}.
\]
Denote the cosets of the vectors $x_{k,n}$ in $N(B^{n+1})/N(B^n)$ by $[x_{k,n}]_B$, $k=1,\ldots,m$. Since
$$
s_n(B,A) = \max\left\{\dim(\calL\cap M(B^n)) : \calL\subset N(A)\cap R(A^n)\text{ subspace},\,\calL\cap M(A^n) = \{0\}\right\},
$$
Lemma \ref{l:Encuentro} implies that
\begin{align*}
\dim\linspan\{[x_{1,n}]_B,\ldots,[x_{m,n}]_B\}
&= m - \dim(\calL_0\cap M(B^n))\\
&\ge m - s_n(B,A) = \dim \frac{N(B^{n+1})}{N(B^n)} + 1,
\end{align*}
which is a contradiction.

On the other hand, assume that there are
\[
p:= \dim \frac{N(A^{n+1})}{N(A^n)} +2
\]
linearly independent vectors $[y_{1,n}]_B,\ldots, [y_{p,n}]_B$ in $\frac{N(B^{n+1})}{N(B^n)}$ and consider corresponding Jordan chains $(y_{k,n}, \ldots, y_{k,0})$ of length $n+1$ of $B$, for $k=1,\ldots,p$. By Lemma \ref{l:Encuentro}, the vectors $y_{1,0},\ldots, y_{p,0}$ are linearly independent and, if $\calL_Y := \linspan\{y_{1,0},\ldots,y_{p,0}\}$, then
\[
\calL_Y\cap M(B^n) = \{0\}.
\]
Now, applying Lemma \ref{l:shifting}, we obtain $p-1$ Jordan chains $(z_{k,n}, \ldots, z_{k,0})$
of length $n+1$ of $A$, $k=1,\ldots,p-1$, such that (after a possible reordering)
\[
z_{k,j}\in y_{k,j} + \linspan\{y_{p,l}: l=0,\ldots,j\} \quad \text{for} \ k=1,\ldots,p-1,\ j=0,\ldots,n.
\]
In particular, for each $k=1,\ldots,p-1$ there exists $\alpha_k\in\mathbb{K}$ such that $z_{k,0}=y_{k,0}+ \alpha_k y_{p,0}$.

Hence, if $\calL_Z:=\linspan\{z_{1,0},\ldots, z_{p-1,0}\}$ it is easy to see that
\[
\calL_Z\cap M(A^n)=\{0\},
\]
because $\calL_Z\subseteq\calL_Y$, $M(A^n)\subseteq M(B^n)$ and $\calL_Y\cap M(B^n)=\{0\}$. Thus, by Lemma \ref{l:Encuentro},
\begin{align*}
\dim \linspan\{[z_{1,n}],\ldots,[z_{p-1,n}]\}= \dim \calL_Z=p-1=\dim \frac{N(A^{n+1})}{N(A^n)} + 1,
\end{align*}
which is a contradiction.

In order to prove item (ii), note that for a linear relation $T$ we have
\begin{equation*}
N(T^n)= N(T)\oplus W_1\oplus\dots\oplus W_{n-1},
\end{equation*}
where $W_j$ is a subspace of $N(T^n)$ isomorphic to $\frac{N(T^{j+1})}{N(T^{j})}$ for $j=1,\ldots, n-1$. This fact follows easily by induction on $n$.
Hence, from item (i) we infer that $\dim N(A^n) < \infty$ if and only if $\dim N(B^n) < \infty$. 
Also, as a consequence of \eqref{abschaetzung4} and Proposition \ref{Thequantitys_n},
\begin{align*}
\left|\dim N(B^n) - \dim N(A^n)\right|
&= \left|\sum_{k=0}^{n-1}\dim\frac{N(B^{k+1})}{N(B^k)} - \sum_{k=0}^{n-1}\dim\frac{N(A^{k+1})}{N(A^k)}\right|\\
&\le \sum_{k=0}^{n-1}\left| \dim\frac{N(B^{k+1})}{N(B^k)} - \dim\frac{N(A^{k+1})}{N(A^k)}\right|\\
&\le \sum_{k=0}^{n-1} \max\{1,s_k(B,A)\} = 1 + \sum_{k=1}^{n-1} k \\
& \le\, 1+\frac{(n-1) n}{2}.
\end{align*}
This concludes the proof of the theorem.
\end{proof}

The next theorem is the main result of this section. It states that the estimate obtained in \cite[Theorem 2.2]{blmt} for operators have to be adjusted when considering arbitrary linear relations. Note that $s_n[A,B] = 0$ for operators $A$ and $B$.


\begin{thm}\label{t:LaLucila}
Let $A$ and $B$ be linear relations in $X$ such that $B$ is a one-dimensional perturbation of $A$ and $n\in\N\cup\{0\}$. Then the following hold:
\begin{enumerate}
	\item[(i)]  $N(A^{n+1})/N(A^n)$ is finite-dimensional if and only if $N(B^{n+1})/N(B^n)$ is finite-dimensional. Moreover,
\begin{equation*}
-1-s_{n}(B,A)\,\le\,\dim\frac{N(B^{n+1})}{N(B^n)} - \dim\frac{N(A^{n+1})}{N(A^n)}\,\le\,1 + s_n(A,B).
\end{equation*}
In particular,
\begin{equation}\label{abschaetzung2}
\left|\dim\frac{N(B^{n+1})}{N(B^n)} - \dim\frac{N(A^{n+1})}{N(A^n)}\right|\,\le\,1 + s_n[A,B]\,\le\,n+1.
\end{equation}
\item[(ii)] $N(A^n)$ is finite-dimensional if and only if $N(B^n)$ is finite-dimensional. Moreover,
$$
\left|\dim N(B^n) - \dim N(A^n)\right|\,\le\,n + \sum_{k=0}^{n-1}s_k[A,B]\,\le\,\frac{n(n+1)}{2}.
$$
\end{enumerate}
\end{thm}
\begin{proof}
Define $C := A\cap B$. Then $C\subset A$ and $C\subset B$ as well as $\dim(A/C)\le 1$ and $\dim(B/C)\le 1$. Moreover, note that
$$
s_n(A,B) = s_n(A,C)
\quad\text{and}\quad
s_n(B,A) = s_n(B,C).
$$
Therefore, using the notation $D_n(T) = \dim\tfrac{N(T^{n+1})}{N(T^n)}$ for a relation $T$ in $X$, from Theorem \ref{t:Harburg} we obtain
$$
D_n(B) - D_n(A) = (D_n(B)-D_n(C)) - (D_n(A) - D_n(C))\,\le\,1 + s_n(A,B)
$$
Exchanging the roles of $A$ and $B$ leads to $D_n(A)-D_n(B)\le 1+s_n(B,A)$. This proves (i).

The proof of statement (ii) is analogous to the proof of its counterpart in Theorem~\ref{t:Harburg}. In this case, as a consequence of \eqref{abschaetzung2},
\begin{align*}
\left|\dim N(B^n) - \dim N(A^n)\right|
\le \sum_{k=0}^{n-1}\left|D_k(A) - D_k(B)\right|\,\le\, \sum_{k=0}^{n-1}(1+s_k[A,B])\,\le\,\frac{n(n+1)}{2},
\end{align*}
and the theorem is proved.
\end{proof}

In Section \ref{s:sharpness} below we prove that the bound $n+1$ in \eqref{abschaetzung2} of Theorem \ref{t:LaLucila} is in fact sharp, meaning that there are examples of linear relations $A$ and $B$ which are one-dimensional perturbations of each other where the quantity on the left hand side of \eqref{abschaetzung2} coincides with $n+1$.

\medskip

The following corollary deals with linear relations without singular chains. If neither $A$ nor $B$ has singular chains then we recover the bounds from the operator case, see Theorem 2.2 in \cite{blmt}. 

\begin{cor}\label{c:no_sings_p=1}
Let $A$ and $B$ be linear relations in $X$ without singular chains such that $B$ is a one-dimensional perturbation of $A$. Then the following statements hold:
\begin{enumerate}
\item[\rm (i)]  $N(A^{n+1})/N(A^n)$ is finite dimensional if and only if $N(B^{n+1})/N(B^n)$ is finite dimensional. Moreover,
$$
\left|\dim\frac{N(A^{n+1})}{N(A^n)} - \dim\frac{N(B^{n+1})}{N(B^n)}\right|\,\le\,1.
$$
\item[\rm (ii)] $N(A^n)$ is finite dimensional if and only if $N(B^n)$ is finite dimensional. Moreover,
\begin{align*}
\left|\dim N (A^n) - \dim N(B^n)\right|\,\le\,n.
\end{align*}
\item[\rm (iii)] $N(A)\cap R(A^n)$ is finite dimensional if and only if $N(B)\cap R(B^n)$ is finite dimensional. Moreover,
$$
\left|\dim(N(A)\cap R(A^n)) - \dim(N(B)\cap R(B^n))\right|\,\le\,1.
$$
\end{enumerate}
\end{cor}

\begin{proof}
If $A$ and $B$ are linear relations in $X$ without singular chains, then $s_n[A,B]=0$ for each $n\in\N$. Therefore, items (i) and (ii) follow directly from items (i) and (ii) in Theorem \ref{t:LaLucila}. Finally, recall that for a linear relation $T$ in $X$ without singular chains we have $N(T^{n+1})/N(T^n)\cong N(T)\cap R(T^n)$, c.f.\  \cite[Lemma 4.4]{ssw}. Hence, (iii) follows from (i).
\end{proof}

\section[\hspace*{1cm}Sharpness of the bound in Theorem \ref{t:LaLucila}]{Sharpness of the bound in Theorem \ref{t:LaLucila}}\label{s:sharpness}
In this section we present an example which shows that the bound $n+1$ in Theorem \ref{t:LaLucila} can indeed be achieved and is therefore sharp. This is easy to see in the cases $n=0$ and $n=1$.

\begin{ex}\label{ex:LaPlata}
(a) Let $n=2$, and let $x_0,x_1,x_2,z_0,z_1,z_2,y_1,y_2,y_3$ be linearly independent vectors in $X$. Define the linear relations
\begin{align*}
A = \linspan\big\{&\{x_{2},x_{1}\},\{x_{1},x_{0}\},\{x_{0},0\},\\
&\{z_{2},z_{1}\},\{z_{1},z_{0}\},\{z_{0},0\},\\
&\boldsymbol{\{}\boldsymbol{y_3}\boldsymbol{,}\boldsymbol{x_{2}}\boldsymbol{-}\boldsymbol{y_2}
\boldsymbol{\}},\{x_{2}-y_2,y_1\},\{y_1,0\},\\
&\{z_{2},y_2\}\big\}
\end{align*}
and
\begin{align*}
B = \linspan\big\{&\{x_{2},x_{1}\},\{x_{1},x_{0}\},\{x_{0},0\},\\
&\{z_{2},z_{1}\},\{z_{1},z_{0}\},\{z_{0},0\},\\
&\{x_{2}-y_2,y_1\},\{y_1,0\},\\
&\{z_{2},y_2\},\boldsymbol{\{}\boldsymbol{y_2}\boldsymbol{,}\boldsymbol{0}\boldsymbol{\}}\big\}.
\end{align*}
All pairs are contained in both $A$ and $B$ except for the two pairs $\boldsymbol{\{}\boldsymbol{y_3}\boldsymbol{,}\boldsymbol{x_{2}}\boldsymbol{-}\boldsymbol{y_2}
\boldsymbol{\}}$ and $\boldsymbol{\{}\boldsymbol{y_2}\boldsymbol{,}\boldsymbol{0}\boldsymbol{\}}$
which are printed here in bold face.
Therefore, $A$ and $B$ are one-dimensional perturbations of each other.
It is easy to see that $M(A^2) = \linspan\{y_2-z_1, x_1-y_1-z_0\}$ and thus $M(A^2)\cap\linspan\{x_0,z_0,y_1\}=\{0\}$.
By Lemma \ref{l:JC_li}, it follows that $[x_{2}]_A,[z_{2}]_A,[y_3]_A$ are linearly independent in $N(A^3)/N(A^2)$. As $N(B^2)=\linspan\{x_0,x_1,x_2,z_0,z_1,z_2,y_1,y_2\}$ it is clear that $N(B^3) = N(B^2)$, hence
$$
\dim\frac{N(A^3)}{N(A^2)} - \dim\frac{N(B^3)}{N(B^2)} = 3 - 0 = 3 = n+1.
$$
(b) Let $n\in \mathbb N$, $n>2$.  For our example we need $(n+1)^2$ linearly independent vectors in the linear space $X$, say $x_{i,j}$ for $i=1,\ldots,n$ and $j=0,\ldots,n$ as well as $y_1,\ldots,y_{n+1}$. Let us consider the linear relation
\begin{align*}
A =
&\linspan\left[\left\{\{x_{k,n},x_{k,n-1}\},\ldots,\{x_{k,1},x_{k,0}\},\{x_{k,0},0\} : k=1,\ldots,n\right\}\,\cup\right.\\
&\cup\, \{y_{n+1},x_{1,n}-y_n\}\,\cup\,\big\{\{x_{k,n}-y_{n-k+1},x_{k+1,n}-y_{n-k}\} : k=1,\ldots,n-2\big\}\\
& \left.\cup\,\{x_{n-1, n}-y_2,y_1\}\cup\,\{y_1,0\}\cup \{x_{n,n},y_n\}\cup \big\{\{y_l,y_{l-1}\}: l=3,\ldots,n\big\} \right].
\end{align*}
Notice that
\[
N(A)=\linspan\{x_{1,0},\ldots,x_{n,0}, y_1\}.
\]
In the following we compute the multivalued part of $A^k$ for $k=1,\ldots, n$. Assume that $x\in M(A)\subset R(A)$. Then
$\{0, x\}\in A$ and there exist scalars $\alpha_{i,j}, \beta_k, \gamma_l\in\K$ such that
\[
x=\sum_{i=1}^n\sum_{j=1}^n \alpha_{i,j}x_{i,j-1} + \sum_{k=1}^{n-2}\gamma_k(x_{k+1,n}-y_{n-k}) + \gamma_{n-1} y_1 + \gamma_n y_n + \beta_n (x_{1,n}-y_n) + \sum_{l=2}^{n-1}\beta_l y_l
\]
and
\begin{align*}
0
&= \sum_{i=1}^n\sum_{j=1}^n \alpha_{i,j}x_{i,j}+ \sum_{k=1}^{n-2}\gamma_k(x_{k,n}-y_{n-k+1}) + \gamma_{n-1}(x_{n-1,n}-y_2) + \gamma_n x_{n,n} + \sum_{l=2}^{n}\beta_l y_{l+1} \\
&= \sum_{i=1}^n (\alpha_{i,n} + \gamma_i)x_{i,n} + \sum_{i=1}^n\sum_{j=1}^{n-1} \alpha_{i,j}x_{i,j} + \beta_n y_{n+1} + \sum_{k=1}^{n-2}(\beta_{n-k}-\gamma_k) y_{n-k+1} - \gamma_{n-1}y_2.
\end{align*}
Therefore,
\[
\left\{
\begin{array}{rl}
\alpha_{i,n} + \gamma_i=0 & \text{for}\ i=1,\ldots,n,\\
\alpha_{i,j}=0& \text{for}\ i=1,\ldots,n, \ j=1,\ldots, n-1, \\
\beta_n=0 & , \\
\gamma_k-\beta_{n-k}=0 & \text{for}\ k=1,\ldots, n-2,\\
\gamma_{n-1}=0 & .
\end{array}
\right.
\]
Hence, we can rewrite the vector $x$ as
\begin{align*}
x=& \sum_{i=1}^{n-2}\alpha_{i,n}x_{i,n-1} + \alpha_{n,n}x_{n,n-1} +  \sum_{k=1}^{n-2}\gamma_k(x_{k+1,n}-y_{n-k}) + \gamma_n y_n + \sum_{l=2}^{n-1}\beta_l y_l \\ &=\sum_{k=1}^{n-2}\gamma_k (x_{k+1,n}-x_{k,n-1}) +  \gamma_n (y_n-x_{n,n-1}).
\end{align*}
Thus,
\begin{equation*}
M(A)=\linspan\left(\{y_n-x_{n,n-1}\}\cup\big\{x_{k+1,n}-x_{k,n-1}: \ k=1,\ldots, n-2\big\}\right).
\end{equation*}
If $x\in M(A^2)$, then there exists $y\in M(A)$ such that $\{y,x\}\in A$. Hence, if $y=\sum_{k=1}^{n-2}\alpha_k (x_{k+1,n}-x_{k,n-1}) + \alpha_{n-1}(y_n-x_{n,n-1})$ then
\begin{align*}
x-\sum_{k=1}^{n-2}\alpha_k (x_{k+1,n-1} - x_{k,n-2}) -\alpha_{n-1}(y_{n-1}-x_{n,n-2}) \in M(A).
\end{align*}
Therefore,
\begin{align*}
M(A^2)=\linspan& \left( \{y_n-x_{n,n-1}\}\cup\big\{x_{k+1,n}-x_{k,n-1}: \ k=1,\ldots, n-2\big\}\cup \right.\\
 & \cup\left. \{y_{n-1}-x_{n,n-2}\}\cup\big\{x_{k+1,n-1}-x_{k,n-2}: \ k=1,\ldots, n-2\big\}\right).
\end{align*}

Following the same arguments it can be shown that
\begin{align*}
M(A^{n-1})=\linspan& \left( \big\{x_{k+1,n-j}-x_{k,n-j-1}: \ k=1,\ldots, n-2,\ j=0,\ldots, n-2\big\}\cup \right.\\
 & \cup\left. \{y_{n-j}-x_{n,n-j-1}: \ j=0,\ldots, n-2\big\}\right).
\end{align*}
and
\begin{align*}
M(A^n)=\linspan& \left( \big\{x_{k+1,n-j}-x_{k,n-j-1}: \ k=1,\ldots, n-2,\ j=0,\ldots, n-1\big\}\cup \right.\\
 & \cup\left. \{y_{n-j}-x_{n,n-j-1}: \ j=0,\ldots, n-2\big\}\cup\{x_{n-1,n-1}-y_1-x_{n,0}\}\right).
\end{align*}
From this it follows that
\begin{equation}\label{e:eichstaett}
\linspan\{x_{1,0},\ldots,x_{n,0}, y_1\}\cap M(A^n)=\{0\}.
\end{equation}
Indeed, if $x$ is a vector contained in the set on the left hand side of \eqref{e:eichstaett}, then
$$
x = \alpha_1x_{1,0} + \dots + \alpha_nx_{n,0} + \alpha_{n+1}y_1 = \sum_{k=1}^{n-2}\beta_k(x_{k+1,1} - x_{k,0}) + \gamma(x_{n-1,n-1}-y_1-x_{n,0}),
$$
where $\alpha_j,\beta_k,\gamma\in\K$ for $j=1,\ldots,n+1$ and $k=1,\ldots,n-2$. This implies
$$
\sum_{k=1}^{n-2}(\alpha_k + \beta_k)x_{k,0} + \alpha_{n-1}x_{n-1,0} + (\alpha_n+\gamma)x_{n,0} + (\alpha_{n+1}+\gamma)y_1 - \sum_{k=1}^{n-2}\beta_kx_{k+1,1} - \gamma x_{n-1,n-1} = 0.
$$
Since all the vectors involved are by assumption linearly independent, it follows that $\gamma = 0$ and also $\beta_k = 0$ for $k=1,\ldots,n-2$ and thus also $\alpha_j = 0$ for all $j=1,\ldots,n+1$. That is, $x = 0$.

Now, it follows from \eqref{e:eichstaett} and Lemma~\ref{l:JC_li} that $[x_{1,n}]_A,\ldots,[x_{n,n}]_A, [y_{n+1}]_A$ are linearly independent in $N(A^{n+1})/N(A^n)$. On the other hand, if we consider the linear relation
\begin{align*}
B &=\linspan\left(\big\{ \{x_{k,j}, x_{k,j-1}\} : k=1,\ldots,n,\,j=1,\ldots,n \big\}
\cup\, \big\{\{x_{k,0},0\} : k=1,\ldots,n \big\}\right.\\
&\cup\, \big\{\{x_{k,n}-y_{n-k+1},x_{k+1,n}-y_{n-k}\} : k=1,\ldots,n-2\big\}\cup\{x_{n-1, n}-y_2,y_1\}\cup\,\{y_1,0\}\\
& \left.\cup  \big\{ \{x_{n,n},y_n\}, \{y_n,y_{n-1}\},\ldots,\{y_3,y_2\}\cup\{y_2,0\}\big\} \right),
\end{align*}
$A$ and $B$ are one-dimensional perturbations of each other. Also, it is straightforward to verify that $D(B) = N(B^n)$. In particular, $N(B^{n+1}) = N(B^n)$ so that
$$
\dim\frac{N(A^{n+1})}{N(A^n)} - \dim\frac{N(B^{n+1})}{N(B^n)} = n+1 - 0  = n+1,
$$
which shows that the worst possible bound is indeed achieved in this example.
\end{ex}

\section[\hspace*{1cm}Finite-dimensional perturbations]{Finite-dimensional perturbations}
\label{Osterhase}

A linear relation $B$ is a finite dimensional perturbation of another linear relation $A$ if both differ by finitely many dimensions from their intersection. Following \cite{ABJT}, we formalize this idea as follows.

\begin{defn}\label{laplata}
Let $A$ and $B$ be linear relations in $X$ and $p\in\N$. Then $B$ is called a {\em $p$-dimensional perturbation} of $A$ (and vice versa) if
\begin{equation*}
\max\left\{\dim\frac{A}{A\cap B},\,\dim\frac B{A\cap B}\right\} = p.
\end{equation*}
\end{defn}


\begin{rem}\label{r:sequence}
Let $A$ and $B$ be linear relations in $X$ which are $p$-dimensional perturbations of each other, $p>1$. Then it is possible to construct a sequence of one-dimensional perturbations, starting in $A$ and ending in $B$. Indeed, choose $\{\widehat f_1,\ldots, \widehat f_p\}$ and $\{\widehat g_1,\ldots,\widehat g_p\}$ in $X\times X$ such that
\begin{equation*}\label{Leipschg}
A = (A\cap B)\ds\linspan\{\widehat f_1,\ldots,\widehat f_{p}\}\quad
\mbox{and} \quad
B= (A\cap B)\ds\linspan\{\widehat g_1,\ldots,\widehat g_{p}\}.
\end{equation*}
Observe that $\{\widehat f_1,\ldots,\widehat f_p\}$ is
linearly independent if and
only if $ \dim\frac{A}{A\cap B}=p$. Otherwise, some of the elements
of $\{\widehat f_1,\ldots,\widehat f_p\}$
can be chosen as zero. An analogous statement
holds for $\{\widehat g_1,\ldots,\widehat g_p\}$.
Define $C_0:=A$ , $C_p:=B$, and
\begin{equation*}\label{e:Cs}
C_k:= (A\cap B)\ds
\linspan\{\widehat f_1,\ldots,\widehat f_{p-k},
\widehat g_{p-k+1},\ldots,\widehat g_p\},\quad k=1,\ldots,p-1.
\end{equation*}
Obviously, $C_{k+1}$ is a one-dimensional perturbation of $C_k$,
$k=0,\ldots,p-1$. If, in addition, $A\subset B$ is satisfied, then $\widehat f_{j}=0$ for all $j=1,\ldots,p$ holds and we obtain
$$
A\subset C_j \subset C_{j+1} \subset B \quad \mbox{for } j=1,\ldots,p-1.
$$
\end{rem}

\begin{thm}\label{t:pdim}
Let $A$ and $B$ be linear relations in $X$ such that $B$ is a $p$-dimensional perturbation of $A$, $p\ge 1$, and $n\in\N\cup\{0\}$. Then the following conditions hold:
\begin{enumerate}
\item[\rm (i)]  $N(A^{n+1})/N(A^n)$ is finite-dimensional if and only if $N(B^{n+1})/N(B^n)$ is finite-dimensional.  Moreover,
$$
\left|\dim\frac{N(A^{n+1})}{N(A^n)} - \dim\frac{N(B^{n+1})}{N(B^n)}\right|\,\le\,(n+1)p.
$$
\item[\rm (ii)]  If, in addition in item {\rm (i)}, $A\subset B$ is satisfied, then we have for $n\geq 1$
$$
\left|\dim\frac{N(A^{n+1})}{N(A^n)} - \dim\frac{N(B^{n+1})}{N(B^n)}\right|\,\le\,np.
$$
\item[\rm (iii)] $N(A^n)$ is finite-dimensional if and only if $N(B^n)$ is finite-dimensional.  Moreover,
$$
\left|\dim N(A^n) - \dim N(B^n)\right|\,\le\,\frac{n(n+1)}{2}p.
$$
\item[\rm (iv)]  If, in addition in item {\rm (iii)}, $A\subset B$ is satisfied, then we have for $n\geq 1$
    $$
\left|\dim N(A^n) - \dim N(B^n)\right|\,\le\,\frac{n(n-1)}{2}p+p.
$$
\end{enumerate}
\end{thm}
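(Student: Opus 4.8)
The plan is to deduce the $p$-dimensional statement from the one-dimensional Theorems \ref{t:Harburg} and \ref{t:LaLucila} by telescoping. The key tool is Remark \ref{r:sequence}: setting $C_0 := A$, $C_p := B$ and interpolating by
\[
C_k := (A\cap B)\ds\linspan\{\wh f_1,\ldots,\wh f_{p-k},\wh g_{p-k+1},\ldots,\wh g_p\},
\]
one obtains a chain $A = C_0, C_1, \ldots, C_p = B$ in which each $C_{k+1}$ is a one-dimensional perturbation of $C_k$ (a step being trivial, $C_{k+1}=C_k$, precisely when the relevant generator was chosen to be zero). Throughout I write $D_n(T) := \dim\tfrac{N(T^{n+1})}{N(T^n)}$, as in the proof of Theorem \ref{t:LaLucila}.

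For item (i) I would apply Theorem \ref{t:LaLucila}(i) to each consecutive pair $(C_k,C_{k+1})$. The finite-dimensionality equivalence for $A$ and $B$ then follows by chaining the step-by-step equivalences from $k=0$ to $k=p-1$. Moreover, estimate \eqref{abschaetzung2} gives $|D_n(C_{k+1}) - D_n(C_k)|\le n+1$ for every $k$, so by the triangle inequality
\[
|D_n(B) - D_n(A)|\,\le\,\sum_{k=0}^{p-1}|D_n(C_{k+1}) - D_n(C_k)|\,\le\,(n+1)p.
\]
For item (iii) I would argue identically but invoke the summed estimate of Theorem \ref{t:LaLucila}(ii) at each step, which bounds $|\dim N(C_{k+1}^n) - \dim N(C_k^n)|$ by $\tfrac{n(n+1)}{2}$; summing over the $p$ steps produces the factor $p$, and the finite-dimensionality of $N(\cdot^n)$ again propagates along the chain.

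For items (ii) and (iv) the extra hypothesis $A\subset B$ is what sharpens the bounds. As recorded in Remark \ref{r:sequence}, in this case all $\wh f_j$ vanish and the interpolating relations form an \emph{increasing} chain $A = C_0\subset C_1\subset\cdots\subset C_p = B$ of one-dimensional extensions. This lets me replace Theorem \ref{t:LaLucila} by the finer Theorem \ref{t:Harburg}: estimate \eqref{abschaetzung4} gives $|D_n(C_{k+1}) - D_n(C_k)|\le n$ at each step (for $n\ge 1$), and Theorem \ref{t:Harburg}(ii) gives $|\dim N(C_{k+1}^n) - \dim N(C_k^n)|\le\tfrac{(n-1)n}{2}$. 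Summing over $k$ then yields the bounds $np$ and $\tfrac{n(n-1)}{2}p$, respectively.

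The only point requiring care—and where I expect the (modest) main obstacle to lie—is the bookkeeping in the reduction rather than any new analytic content. One must check that the interpolating relations of Remark \ref{r:sequence} genuinely sit in the asserted containment: a one-dimensional perturbation in the general case, and an honest one-dimensional extension when $A\subset B$, so that the sharper Theorem \ref{t:Harburg} is legitimately applicable at each step for (ii) and (iv); one must also confirm that steps with a zero generator contribute nothing to the telescoping sum, so that the count of nontrivial one-dimensional steps never exceeds $p$. Once this structural verification is in place, all four inequalities follow purely formally from the one-dimensional theorems via the triangle inequality, making no further direct use of singular chains or of the quantity $s_n[\cdot,\cdot]$.
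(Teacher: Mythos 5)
Your proposal is correct and takes essentially the same route as the paper's own proof: Remark \ref{r:sequence} supplies the interpolating chain $A=C_0,C_1,\ldots,C_p=B$ of one-dimensional perturbations (an increasing chain of one-dimensional extensions when $A\subset B$), and all four bounds follow by telescoping Theorem \ref{t:LaLucila} (for (i) and (iii)) respectively Theorem \ref{t:Harburg} (for (ii) and (iv)) with the triangle inequality. Your remark that steps with a zero generator are trivial and only improve the estimates addresses a point the paper passes over silently, but it changes nothing of substance.
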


\begin{proof}
By Remark \ref{r:sequence} there exist linear relations $C_0,\ldots,C_p$ in $X$ with $C_0 = A$ and $C_p = B$ such that $C_{k+1}$ is a one-dimensional perturbation of $C_k$, $k=0,\ldots,p-1$. Hence, applying item (i) in Theorem \ref{t:LaLucila} repeatedly, we obtain
\begin{align*}
\left|\dim\frac{N(B^{n+1})}{N(B^n)} - \dim\frac{N(A^{n+1})}{N(A^n)}\right|
&\le\sum_{k=0}^{p-1}\left|\dim\frac{N(C_{k+1}^{n+1})}{N(C_{k+1}^n)} - \dim\frac{N(C_k^{n+1})}{N(C_k^n)}\right|\,\le\,(n+1)p.
\end{align*}
Also, applying item (ii) in Theorem \ref{t:LaLucila} repeatedly,
$$
\left|\dim N(A^n) - \dim N(B^n)\right|
\le\sum_{k=0}^{p-1}\left|\dim N(C_{k+1}^n) - \dim N(C_k^n)\right|
\,\le\,\frac{n(n+1)}{2}p,
$$
which shows (iii). The statements (ii) and (iv) in the case $A\subset B$ follows in the same way
from Remark \ref{r:sequence} and Theorem \ref{t:Harburg}.
\end{proof}

For linear relations $A$ and $B$ without singular chains we obtain the same (sharp) estimates as for operators, see \cite{blmt}.

\begin{cor}\label{c:no_sings}
Let $A$ and $B$ be linear relations in $X$ without singular chains such that $B$ is a $p$-dimensional perturbation of $A$, $p\ge 1$. Then the following conditions hold:
\begin{enumerate}
\item[\rm (i)]  $N(A^{n+1})/N(A^n)$ is finite-dimensional if and only if $N(B^{n+1})/N(B^n)$ is finite-dimensional. Moreover,
$$
\left|\dim\frac{N(A^{n+1})}{N(A^n)} - \dim\frac{N(B^{n+1})}{N(B^n)}\right|\,\le\,p.
$$
\item[\rm (ii)] $N(A^n)$ is finite-dimensional if and only if $N(B^n)$ is finite-dimensional. Moreover,
\begin{align*}
\left|\dim N (A^n) - \dim N(B^n)\right|\,\le\,np.
\end{align*}
\item[\rm (iii)] $N(A)\cap R(A^n)$ is finite-dimensional if and only if $N(B)\cap R(B^n)$ is finite-dimensio\-nal.  Moreover,
$$
\left|\dim(N(A)\cap R(A^n)) - \dim(N(B)\cap R(B^n))\right|\,\le\,p.
$$
\end{enumerate}
\end{cor}

\begin{proof}
The claims follow immediately applying repeatedly the results in Corollary \ref{c:no_sings_p=1} to the finite sequence of one-dimensional prturbations $A=C_0, C_1,\ldots, C_p=B$.
\end{proof}

\vspace*{.3cm}
\section[\hspace*{1cm}Rank-one perturbations of matrix pencils]{Rank-one perturbations of matrix pencils}\label{Nicki}
In this section we  apply our results to matrix pencils $P$ of the form
\begin{equation*}
P(s):= sE-F,
\end{equation*}
where $s\in \mathbb C$ and  $E$, $F$ are square matrices in $\mathbb C^{d\times d}$.
We will estimate the change of the number of Jordan chains of $P$
under a perturbation with a rank-one matrix pencil.

We do not assume $E$  to be invertible. Nevertheless,
 if we identify $E$ with
the linear relation given by the graph of  $E$ then we have an inverse $E^{-1}$ of $E$
in the sense of linear relations, see \eqref{Dammtor}.
Also, we have that
\begin{equation*}
E^{-1}F = \left\{\{x,y\}\in \mathbb C^d\times \mathbb C^d : Fx=Ey \right\} = N [F \ -E].
\end{equation*}

Recall that $\lambda\in\mathbb{C}$ is an eigenvalue of $P(s)=sE-F$ if zero is an eigenvalue of $P(\lambda)$, and $\infty$ is an eigenvalue of $P$ if zero is an eigenvalue of the dual matrix pencil $(\rev P)(s)=sF-E$.
In the following we recall the notion of Jordan chains for matrix pencils, see  e.g.\ \cite[Section 1.4]{GLR09}, \cite{K51}, or \cite[\S 11.2]{M88}.

\begin{defn}\label{LA}
An ordered set $(x_n, \ldots ,x_{0})$ in $\mathbb C^d$ is a \emph{Jordan chain of length $n+1$ at  $\lambda\in\ol{\C}:=\C\cup\{\infty\}$} (for the matrix pencil $P(s)$) if $x_{0}\neq 0$ and
\begin{equation*}
\begin{array}{lrrrr}
\lambda\in\mathbb C: & (F -\la E)x_0=0, & (F-\lambda E)x_1 = E x_0, & \ldots, & (F-\lambda E)x_n = E x_{n-1}, \\[2mm]
\lambda=\infty: & E x_0 = 0, & E x_1  = F x_0, & \ldots,& E x_n = F x_{n-1}.
 \end{array}
\end{equation*}
Moreover, we denote by $\mathcal L_{\lambda}^l(P)$ the subspace spanned by the vectors of all Jordan chains up to length $l\ge 1$ at $\lambda \in \overline{\mathbb C}$.
If $l=0$ or if $\la$ is not an eigenvalue of $P$ we define $\mathcal L_{\lambda}^l(P)=\{0\}$.
\end{defn}

\begin{rem}
As mentioned above, Definition~\ref{LA} is inspired by the
definition of eigenvalues of pencils introduced in 1951 by M.V.\ Keldysh
who used this concept in his study  of operator pencils, see \cite{K51,M88}. This definition fits to the definition of eigenvalues of linear relations and in that
way to the purpose of this paper.

The authors are aware of the fact  that in many recent publications in the matrix pencil community, see for instance \cite{BR20,DD07,DMT08,DMT08sing,MMW15,S83}, a different definition
for eigenvalues of matrix pencils is used which is based on changes in the rank of $P(s)$. What in our paper is called an eigenvalue is there sometimes called a singular point.
However, this concept is not used in the community of linear relations, so we apologize and warn for possible misunderstandings.
\end{rem}

Given a matrix pencil $P(s)$, the aim of this section is to obtain lower and upper bounds for the difference
\[
\dim\frac{\calL_\la^{n+1}(P +Q)}{\calL_\la^{n}(P+Q)} - \dim\frac{\calL_\la^{n+1}(P)}{\calL_\la^{n}(P)},
\]
where $Q$ is a rank-one matrix pencil, $n\in\N\cup\{0\}$ and $\lambda\in\overline{\mathbb{C}}$.

\medskip

We start with a simple lemma, which follows directly from the definitions. It allows us to reduce the study of Jordan chains at some $\la\in\ol\C$ to Jordan chains at zero.

\begin{lem}\label{Marseille0}
Given a matrix pencil $P(s)=sE-F$, the following statements hold:
\begin{enumerate}
\item[{\rm (i)}]  $(x_n, \ldots ,x_{0})$ is a Jordan chain of $P$ at $\lambda\in\mathbb C$ if and only if it is a Jordan chain of the matrix pencil $\tilde{P}(s):=sE-(F-\lambda E)$ at zero.
\item[{\rm (ii)}] $(x_n, \ldots ,x_{0})$ is a Jordan chain of $P(s)$ at  $\infty$ if and only if
it is a Jordan chain of the dual matrix pencil $(\rev P)(s):= sF- E$ at zero.
\end{enumerate}
\end{lem}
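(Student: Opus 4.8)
The plan is to prove both statements directly from Definition \ref{LA}, exploiting the fact that the defining equations in \eqref{eq:chain_pencil} are themselves set up as chains for the appropriate matrix pencil, so that the claimed equivalences amount to rewriting those equations for the shifted or dualized pencil.

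For item (i), I would take the matrix pencil $\mathcal{B}(s) = sE - (F - \lambda E)$ and write out what it means for $(x_n,\ldots,x_0)$ to be a Jordan chain of $\mathcal{B}$ \emph{at zero}. By Definition \ref{LA} applied to $\mathcal{B}$ with eigenvalue $0$, the defining conditions read $(F - \lambda E)x_0 = 0$ and $(F - \lambda E)x_j = E x_{j-1}$ for $j = 1,\ldots,n$, together with $x_0 \neq 0$. These are \emph{exactly} the conditions in the first line of \eqref{eq:chain_pencil} characterizing a Jordan chain of $\mathcal{A}$ at $\lambda$. So the equivalence is immediate once both sides are expanded; there is nothing to compute beyond recognizing that the ``coefficient matrix at the eigenvalue'' of $\mathcal{B}$ at $0$ is $F - \lambda E$, which coincides with the coefficient matrix $F - \lambda E$ of $\mathcal{A}$ at $\lambda$, while the factor multiplying the chain is $E$ in both cases. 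I would present this as a single line of equation-matching in each direction.

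For item (ii), I would compare the $\lambda = \infty$ conditions of $\mathcal{A}$ with the $\lambda = 0$ conditions of the dual pencil $\mathcal{A}'(s) = sF - E$. The infinity conditions for $\mathcal{A}$ are $E x_0 = 0$ and $E x_j = F x_{j-1}$ for $j = 1,\ldots,n$, with $x_0 \neq 0$. Writing $\mathcal{A}'(s) = sF - E$ in the template $sE' - F'$ gives $E' = F$ and $F' = E$; then the conditions for a Jordan chain of $\mathcal{A}'$ at zero are $F' x_0 = 0$ and $F' x_j = E' x_{j-1}$, that is $E x_0 = 0$ and $E x_j = F x_{j-1}$, again with $x_0 \neq 0$. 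These coincide verbatim with the infinity conditions for $\mathcal{A}$, so the equivalence follows.

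I do not expect any genuine obstacle here, as the lemma is stated in the excerpt to ``follow directly from the definitions''; the only care needed is bookkeeping, namely correctly identifying the $(E,F)$-roles when passing to $\mathcal{B}$ and $\mathcal{A}'$ and making sure the condition $x_0 \neq 0$ is carried along so that genuine Jordan chains correspond to genuine Jordan chains rather than to arbitrary quasi-Jordan chains. The cleanest presentation is to fix the chain $(x_n,\ldots,x_0)$ and observe that the system of equations defining a Jordan chain is invariant under the substitutions $F \mapsto F - \lambda E$ (for (i)) and $(E,F) \mapsto (F,E)$ (for (ii)), which is what makes both equivalences tautological.
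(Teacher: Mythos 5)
Your proof is correct and matches the paper's intent exactly: the paper offers no written proof, stating only that the lemma ``follows directly from the definitions,'' and your equation-matching argument (identifying $E'=E$, $F'=F-\lambda E$ for part (i) and $E'=F$, $F'=E$ for part (ii), with the condition $x_0\neq 0$ carried along) is precisely the routine verification being left to the reader. Nothing is missing.
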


The following proposition shows that the Jordan chains of the matrix pencil $P(s)$ coincide with the Jordan chains of the linear relation $E^{-1}F$. As the proof is simple and straightforward, we omit it.

\begin{prop}\label{FuerFritze}
For $n\in\N\cup\{0\}$ and $\la\in\ol\C$ the following two statements are equivalent.
\begin{itemize}
\item[\rm (i)]  $(x_n, \ldots ,x_{0})$ is a Jordan chain of $P$ at $\la$.
\item[\rm (ii)] $(x_n, \ldots ,x_{0})$ is a quasi-Jordan chain of $E^{-1}F$ at $\la$.
\end{itemize}
In particular, for $\la\in\C$ we have
$$
\calL_\la^n(P) = N((E^{-1}F - \la)^n).
$$
\end{prop}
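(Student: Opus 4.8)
The statement is purely a matter of unwinding the two notions of a chain and identifying their defining relations, so the plan is to reduce everything to the elementary translation provided by \eqref{Blankspace}. First I would treat the case $\la\in\C$. By Definition \ref{LA}, statement (i) says that $x_0\neq 0$ and $(F-\la E)x_0 = 0$, $(F-\la E)x_j = Ex_{j-1}$ for $j=1,\ldots,n$. On the other hand, (ii) unwinds as follows: $(x_n,\ldots,x_0)$ is a quasi-Jordan chain of $E^{-1}F$ at $\la$ exactly when it is a chain of $E^{-1}F-\la$ with $x_0\in N(E^{-1}F-\la)$, i.e.\ $\{x_j,x_{j-1}\}\in E^{-1}F-\la$ for $j=1,\ldots,n$ and $\{x_0,0\}\in E^{-1}F-\la$. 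Rewriting $\{x_j,x_{j-1}\}\in E^{-1}F-\la$ as $\{x_j,x_{j-1}+\la x_j\}\in E^{-1}F$ and applying \eqref{Blankspace} turns this into $Fx_j = E(x_{j-1}+\la x_j)$, that is $(F-\la E)x_j = Ex_{j-1}$; likewise $\{x_0,0\}\in E^{-1}F-\la$ becomes $(F-\la E)x_0 = 0$. Thus (i) and (ii) encode literally the same system of equations, and the equivalence follows once the normalization $x_0\neq 0$ is carried along on both sides.

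For $\la=\infty$ I would proceed analogously, now using the inverse relation. Here (ii) means that $(x_n,\ldots,x_0)$ is a quasi-Jordan chain of $(E^{-1}F)^{-1}$, and since $\{x_j,x_{j-1}\}\in(E^{-1}F)^{-1}$ is equivalent to $\{x_{j-1},x_j\}\in E^{-1}F$, \eqref{Blankspace} gives $Ex_j = Fx_{j-1}$, while $\{x_0,0\}\in(E^{-1}F)^{-1}$ gives $Ex_0 = 0$. These are exactly the equations in the second row of \eqref{eq:chain_pencil}. Alternatively, one may invoke Lemma \ref{Marseille0}(ii) to reduce the chain at $\infty$ for $\calA$ to a chain at $0$ for the dual pencil $\calA'(s)=sF-E$, whose associated relation is $F^{-1}E=(E^{-1}F)^{-1}$, and then quote the finite case. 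This is the only place where one must be attentive, since the roles of $E$ and $F$ are interchanged.

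Finally, for the identity $\calL_\la^n(\calA)=N((E^{-1}F-\la)^n)$ with $\la\in\C$, set $B:=E^{-1}F-\la$. The inclusion $\calL_\la^n(\calA)\subseteq N(B^n)$ is immediate: if $(x_k,\ldots,x_0)$ is a Jordan chain of length $k+1\le n$, then appending the pair $\{x_0,0\}\in B$ shows $\{x_j,0\}\in B^{j+1}$, so each $x_j\in N(B^{j+1})\subseteq N(B^n)$, and these vectors span $\calL_\la^n(\calA)$. For the reverse inclusion, take $x\in N(B^n)$ and a chain $(x=z_n,z_{n-1},\ldots,z_1,z_0=0)$ of $B$ witnessing $\{x,0\}\in B^n$. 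The point I expect to require the most care is exactly that such a chain is a quasi-Jordan chain with vanishing bottom entry, which is \emph{not} itself a matrix-pencil Jordan chain in the sense of Definition \ref{LA}. The remedy is to peel off the trailing zeros: assuming $x\neq 0$ (the case $x=0$ being trivial), let $j$ be the smallest index with $z_j\neq 0$; then $\{z_j,z_{j-1}\}=\{z_j,0\}\in B$ shows $z_j\in N(B)=N(E^{-1}F-\la)$, so $(z_n,\ldots,z_j)$ is a genuine Jordan chain of $\calA$ at $\la$ of length $n-j+1\le n$ with top vector $x=z_n$. Hence $x\in\calL_\la^n(\calA)$, which completes the argument.
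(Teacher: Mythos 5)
Your proof is correct, and it is essentially the argument the paper has in mind: the authors omit the proof as ``simple and straightforward,'' and your definitional unwinding via \eqref{Blankspace} is exactly that computation. You also correctly isolate and handle the only two delicate points — carrying the normalization $x_0\neq 0$ (which quasi-Jordan chains of a relation do not require, so the stated equivalence is really an equivalence up to this normalization), and truncating the trailing zeros of a chain witnessing $x\in N((E^{-1}F-\la)^n)$ so as to produce a genuine pencil Jordan chain — which is precisely what makes the identity $\calL_\la^n(\calA)=N((E^{-1}F-\la)^n)$ go through.
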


Note that the quasi-Jordan chains of a linear relation $A$ at $\infty$ are the same as the quasi-Jordan chains of the inverse linear relation $A^{-1}$ at zero.
Moreover, it is easy to see that $E^{-1}F=(F^{-1}E)^{-1}$. Therefore,

\begin{cor}\label{infinito}
$(x_n, \ldots ,x_{0})$ is a Jordan chain of $P(s)=sE-F$ at $\infty$ if and only if $(x_n, \ldots ,x_{0})$ is a quasi-Jordan chain of $F^{-1}E$ at zero. In particular,
\[
\calL_\infty^n(P) = M((E^{-1}F)^n)=(N((F^{-1}E)^n)=\calL_0^n(\rev P).
\]
\end{cor}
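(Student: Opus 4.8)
The plan is to chain together Proposition \ref{FuerFritze}, the definition of a quasi-Jordan chain at $\infty$, and a single elementary identity for inverse relations. First I would apply Proposition \ref{FuerFritze} in the case $\la=\infty$: the tuple $(x_n,\ldots,x_0)$ is a Jordan chain of $\calA$ at $\infty$ if and only if it is a quasi-Jordan chain of $E^{-1}F$ at $\infty$. By the definition of quasi-Jordan chains at $\infty$ recalled in Section \ref{Preliminaries}, the latter means precisely that $(x_n,\ldots,x_0)$ is a quasi-Jordan chain (at zero) of the inverse relation $(E^{-1}F)^{-1}$. Hence the first assertion reduces to identifying $(E^{-1}F)^{-1}$ with $F^{-1}E$.

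This identification is the only computation and is short. Using \eqref{Blankspace}, $E^{-1}F = \{\{x,y\} : Fx = Ey\}$, so by the definition of the inverse in \eqref{Dammtor} we get $(E^{-1}F)^{-1} = \{\{y,x\} : Fx = Ey\} = \{\{a,b\} : Ea = Fb\}$; performing the same computation with the roles of $E$ and $F$ interchanged shows $F^{-1}E = \{\{a,b\} : Ea = Fb\}$ as well. (Alternatively, one may invoke the elementary rules $(PQ)^{-1} = Q^{-1}P^{-1}$ and $(E^{-1})^{-1} = E$ for relations.) Thus $(E^{-1}F)^{-1} = F^{-1}E$, and combining this with the two equivalences above proves that $(x_n,\ldots,x_0)$ is a Jordan chain of $\calA$ at $\infty$ exactly when it is a quasi-Jordan chain of $F^{-1}E$ at zero.

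For the displayed identity $\calL_\infty^n(\calA) = N((F^{-1}E)^n)$ I would pass to the dual pencil, where everything is already at zero. By Lemma \ref{Marseille0}(ii) a tuple is a Jordan chain of $\calA$ at $\infty$ if and only if it is a Jordan chain at zero of $\calA'(s) = sF - E$, so that $\calL_\infty^n(\calA) = \calL_0^n(\calA')$. Now $\calA'$ is a pencil $sE' - F'$ with $E' = F$ and $F' = E$, hence its associated relation is $(E')^{-1}F' = F^{-1}E$, and the ``in particular'' part of Proposition \ref{FuerFritze} applied to $\calA'$ at $\la = 0$ gives $\calL_0^n(\calA') = N((F^{-1}E)^n)$. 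Combining the two equalities yields the claim. Since every step is a direct appeal to a previously established equivalence or to a definition, I do not anticipate any genuine obstacle; the only point requiring a moment's attention is the relation-inverse identity $(E^{-1}F)^{-1} = F^{-1}E$.
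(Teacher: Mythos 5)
Your proposal is correct and follows essentially the same route as the paper, which derives the corollary from Proposition \ref{FuerFritze} at $\la=\infty$ together with the observation that quasi-Jordan chains at $\infty$ are, by definition, quasi-Jordan chains of the inverse relation at zero, with the identity $(E^{-1}F)^{-1}=F^{-1}E$ (which you verify explicitly via \eqref{Dammtor} and \eqref{Blankspace}) left implicit. Your use of Lemma \ref{Marseille0}(ii) and the dual pencil to obtain $\calL_\infty^n(\calA)=N((F^{-1}E)^n)$ is only a cosmetic variation: it recycles the ``in particular'' part of Proposition \ref{FuerFritze} rather than re-deriving the span identity directly from the chain equivalence.
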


Due to Proposition \ref{FuerFritze}, for $n\in\N\cup\{0\}$ and $\la\in\C$ we have
$$
\dim\frac{\calL_\la^{n+1}(P)}{\calL_\la^{n}(P)} = \dim\frac{N((E^{-1}F-\la)^{n+1})}{N((E^{-1}F-\la)^{n})}.
$$
On the other hand, Corollary \ref{infinito} implies that
\[
\dim\frac{\calL_\infty^{n+1}(P)}{\calL_\infty^{n}(P)} = \dim\frac{N((F^{-1}E)^{n+1})}{N((F^{-1}E)^{n})}.
\]

For a given a matrix pencil $P(s)=sE-F$ we now consider perturbations 
of the form
\begin{equation}\label{stoerung}
 Q(s)=w(su^*-v^*), 
\end{equation}
where $u,v,w\in\mathbb{C}^d$, $(u,v)\neq(0,0)$ and $w\neq 0$. These are rank-one matrix pencils. Recall that the rank of a matrix pencil $Q$ is the largest $r\in\mathbb N$ such that $Q$, viewed as a matrix with polynomial entries, has minors of size $r$  that are not identically zero \cite{DMT08,G59}.
Then, $P$ and $P+Q$ are rank-one perturbations of each other, in the sense that they differ by (at most) a rank-one matrix pencil.

\begin{lem}\label{l:kernelvsrange}
Given $P(s)=sE-F$, let $Q$ be a rank-one matrix pencil as in \eqref{stoerung}.
Then, the linear relations
$$
E^{-1}F \quad \mbox{and} \quad  \left(E+wu^*\right)^{-1}(F+wv^*)
$$
either coincide or they are one-dimensional perturbations of each other
in the sense of Definition \rmref{laplata}.
\end{lem}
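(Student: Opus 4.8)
The plan is to transfer the statement into a plain fact about rank-one perturbations of rectangular matrices, using the kernel description \eqref{Blankspace}. First I would apply \eqref{Blankspace} to write both relations as kernels over $\C^{2d}\cong\C^d\times\C^d$. With $G:=[\,F\ \ -E\,]$ regarded as a $d\times 2d$ matrix, \eqref{Blankspace} reads $E^{-1}F=N(G)$. Substituting $E+wu^*$ for $E$ and $F+wv^*$ for $F$ (the identity \eqref{Blankspace} rests only on the product \eqref{Produkti} of the graphs of the two matrices, and is therefore valid verbatim after the substitution) gives
$$
(E+wu^*)^{-1}(F+wv^*)=N(G'),\qquad G':=[\,F+wv^*\ \ -E-wu^*\,].
$$

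Next I would identify the perturbation at the matrix level. A direct computation shows
$$
G'-G=[\,wv^*\ \ -wu^*\,]=w\,r,\qquad r:=[\,v^*\ \ -u^*\,],
$$
where $w\in\C^d$ is a column and $r\in\C^{1\times 2d}$ is a row. The standing assumptions $w\neq0$ and $(u,v)\neq(0,0)$ ensure $w\neq0$ and $r\neq0$; hence $wr$ has rank one and $N(r)$ is a hyperplane in $\C^{2d}$.

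The heart of the argument is then the computation of $A\cap B=N(G)\cap N(G')$. For $\xi\in N(G)$ one has $G'\xi=G\xi+w(r\xi)=w(r\xi)$, so, since $w\neq0$, the relation $\xi\in N(G')$ holds precisely when $r\xi=0$. Thus $N(G)\cap N(G')=N(G)\cap N(r)$, and because $N(r)$ has codimension one the inclusion $N(G)\hookrightarrow\C^{2d}$ induces an injection $N(G)/(N(G)\cap N(r))\hookrightarrow\C^{2d}/N(r)\cong\C$. This yields $\dim(A/(A\cap B))\le1$. Writing instead $G=G'+w(-r)$ and repeating the identical reasoning with the roles of $G$ and $G'$ interchanged gives $\dim(B/(A\cap B))\le1$ as well, so the maximum in \eqref{SoIssEs} is at most $1$, which is the assertion.

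I expect the only points requiring genuine care to be the bookkeeping around the reformulation, in particular checking that the block row $[\,v^*\ \ -u^*\,]$ is nonzero \emph{exactly} because $(u,v)\neq(0,0)$, so that the perturbation is honestly of rank one and $N(r)$ is a proper hyperplane. I would also flag that the estimate produced is $\max\{\dim(A/(A\cap B)),\dim(B/(A\cap B))\}\le1$: the maximum can in fact drop to $0$ (this happens precisely when $r$ lies in the row space of $G$, say $r=\ell G$, and $G'=(I+w\ell)G$ shares the kernel of $G$), so the two relations may coincide. This degenerate possibility is harmless for the subsequent perturbation estimates, since all bounds in Theorem~\ref{t:LaLucila} hold trivially when $A=B$.
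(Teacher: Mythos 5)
Your proposal is correct and takes essentially the same approach as the paper: the paper likewise computes $A\cap B$ explicitly and identifies it as the intersection of either relation with the single hyperplane $\left\{ \{x,y\} : v^*x = u^*y \right\}$ (the orthogonal complement of one vector in $\C^d\times\C^d$), which immediately bounds both quotient dimensions by one, exactly as your kernel computation $N(G)\cap N(G') = N(G)\cap N(r)$ does. Your closing remark that the maximum in \eqref{SoIssEs} may degenerate to $0$ when the two relations coincide is a fair observation, but it applies equally to the paper's own proof, which also only establishes the bound $\le 1$.
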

\begin{proof}
Obviously, for $\calM := E^{-1}F \cap \left(E+wu^*\right)^{-1}(F+wv^*)$ we have
$$
\mathcal M = \left\{
\{x,y\} \in \mathbb C^d\times \mathbb C^d :
Fx=Ey\;
\mbox{ and }\; (F+wv^*)x=(E+wu^*)y
\right\}.
$$
That is,
$$
\mathcal M = E^{-1}F \cap \{v, -u\}^\bot
= \left(E+wu^*\right)^{-1}(F+wv^*) \cap \{v, -u\}^\bot.
$$
This implies
$$
\dim \frac{E^{-1}F}{\mathcal M}\leq 1 \qquad \mbox{and}\qquad
\dim \frac{\left(E+wu^*\right)^{-1}(F+wv^*)}{\mathcal M}\leq 1,
$$
which proves the claim.
\end{proof}

Matrix pencils as in \eqref{stoerung} do not cover the set of all rank-one matrix pencils in $\C^d$. The remaining rank-one matrix pencils can be written as
\begin{equation}\label{otras}
	Q(s)=(su-v)w^*,
\end{equation}
where $u,v,w\in\C^d$ are such that $(u,v)\neq(0,0)$ and $w\neq 0$. Given $P(s)=sE-F$ and a rank-one pencil $Q$ of the form \eqref{otras}, the associated linear relations $E^{-1}F$ and $(E+uw^*)^{-1}(F+vw^*)$ can be two-dimensional perturbations of each other. Hence, the statements in Lemma \ref{l:kernelvsrange} are not valid for  rank-one matrix pencils of the form\eqref{otras}.

On the other hand, the linear relations $FE^{-1}$ and $(F+vw^*)(E+uw^*)^{-1}$ are (at most) one-dimensional perturbations of each other in the sense of Definition \ref{laplata}.  A deeper analysis of the correspondence between matrix pencils and their representing linear relations will be provided in the forthcoming manuscript \cite{GMPPT}, where the Segre and Weyr characteristics for linear relations are introduced. The results will then give rise to sharp estimates on similar quantities as above for (all) one-dimensional perturbations.

\begin{rem}\label{duales}
Applying Lemma \ref{l:kernelvsrange} to the dual matrix pencils $\rev P$ and $\rev Q$, it follows that
\[
F^{-1}E \ \ \ \text{and} \ \ \ (F + wv^*)^{-1}(E+wu^*)
\]
either coincide or they are one-dimensional perturbations of each other
in the sense of Definition \rmref{laplata}.
\end{rem}

The following theorem is the second main result of this article. We consider here
all possible situations of regular/singular matrix pencils $P$ and $P+Q$. Recall that
a matrix pencil $P(s)=sE-F$ is called \textit{regular} if $\det(sE-F)$ is not identically zero. Otherwise, $P$ is called  \textit{singular}.

\begin{thm}\label{grosserWurf}
Given $P(s)=sE-F$, let $Q$ be a rank-one matrix pencil as in \eqref{stoerung}. For  $\lambda \in \overline{\mathbb C}$ and  $n\in\N\cup\{0\}$, the following statements hold:
\begin{itemize}
\item[\rm (i)] If both pencils $P$ and $P+Q$ are
regular, then 
	\begin{align*}
\left|\dim\frac{\mathcal{L}_{\lambda}^{n+1}(P+Q)}{\mathcal{L}_{\lambda}^n(P+Q)}-
\dim\frac{\mathcal{L}_{\lambda}^{n+1}(P)}{\mathcal{L}_{\lambda}^{n}(P)}\right|
\leq 1.
	\end{align*}
\item[\rm (ii)]
 If $P$ is regular but $P+Q$ is singular, then 
	\begin{align*}
-1-n \leq  \dim\frac{\mathcal{L}_{\lambda}^{n+1}(P+Q)}{\mathcal{L}_{\lambda}^n(P+Q)}-
\dim\frac{\mathcal{L}_{\lambda}^{n+1}(P)}{\mathcal{L}_{\lambda}^{n}(P)}
\leq 1.
	\end{align*}
\item[\rm (iii)]
 If $P$ is singular and $P+Q$ is regular, then 
	\begin{align*}
-1 \leq  \dim\frac{\mathcal{L}_{\lambda}^{n+1}(P+Q)}{\mathcal{L}_{\lambda}^n(P+Q)}-
\dim\frac{\mathcal{L}_{\lambda}^{n+1}(P)}{\mathcal{L}_{\lambda}^{n}(P)}
\leq n+1.
	\end{align*}
\item[\rm (iv)]
 If both $P$ and $P+Q$ are singular, then 
	\begin{align*}
\left|\dim\frac{\mathcal{L}_{\lambda}^{n+1}(P+Q)}{\mathcal{L}_{\lambda}^n(P+Q)}-
\dim\frac{\mathcal{L}_{\lambda}^{n+1}(P)}{\mathcal{L}_{\lambda}^{n}(P)}\right|
\leq n+1.
	\end{align*}
\end{itemize}
\end{thm}

\begin{proof}
According to Lemma~\ref{Marseille0}, if $\la\in\C$ we may assume $\lambda = 0$. By
Proposition \ref{FuerFritze}, for $n\in\N\cup\{0\}$ we have that
\begin{align*}
\mathcal{L}_{0}^{n}(P) = N\big((E^{-1}F)^n\big)
\quad \mbox{and} \quad
\mathcal{L}_{0}^{n}(P+Q) = N(B^n),
\end{align*}
where $B:=(E+wu^*)^{-1}(F+wv^*)$.
Due to Lemma~\ref{l:kernelvsrange} the linear relations $E^{-1}F$ and $B$ are (at most) one-dimensional perturbations of each other and, by  Theorem~\ref{t:LaLucila},
\begin{equation*}
-1-s_{n}(B,E^{-1}F)\le\,\dim\frac{\mathcal{L}_{0}^{n+1}(P+Q)}{\mathcal{L}_{0}^n(P+Q)}-
\dim\frac{\mathcal{L}_{0}^{n+1}(P)}{\mathcal{L}_{0}^{n}(P)}\le 1 + s_n(E^{-1}F,B).
\end{equation*}
Then, Proposition~\ref{Thequantitys_n} implies statement (iv). If the pencil $P$
is regular then, by definition, not every complex number is an eigenvalue of $P$. Hence,
by  Proposition~\ref{FuerFritze}, those numbers are neither eigenvalues of $E^{-1}F$.
From \cite{ssw05} it follows that, in this case, $E^{-1}F$ has no singular chains
and we conclude that
$$
s_n(E^{-1}F,B)=0,
$$
see \eqref{Def_s_n}.
Similarly, if $P+Q$ is regular we obtain $s_{n}(B,E^{-1}F)=0$,
which shows the remaining statements (i)--(iii).

For $\la=\infty$ similar arguments can be used using $F^{-1}E$ and $C:=(F+wv^*)^{-1}(E+wu^*)$ instead of $E^{-1}F$ and $B$, see Corollary \ref{infinito} and Remark \ref{duales}.
\end{proof}

Note that the estimate in item (i) of Theorem \ref{grosserWurf} is already known. It was shown in \cite[Lemma 2.1]{DMT08}
with the help of a result for polynomials, see also \cite[Theorem 1]{T80}. The remaining estimates in Theorem \ref{grosserWurf} are completely new.

\begin{ex}
In this and the following section we focus on matrix pencils, but
most of the statements remain true if we consider operator pencils of the form
$$
Z(s) := sE-F,
$$
where $E$ and $F$ are linear and bounded operators in some Hilbert space $X$.
If $E$ and $F$ are compact operators, then $Z(s)$ is a Keldysh pencil, see \cite{K51}.

Assume that $E$ and $F$ are bounded operators.
One defines eigenvalues and Jordan chains as in Definition~\ref{LA} and it is easily seen
that also Lemma~\ref{Marseille0}, Proposition~\ref{FuerFritze}, Corollary~\ref{infinito}
and Lemma~\ref{l:kernelvsrange} hold, as they are based on algebraic properties only,
where $Q(s)$ for some vectors $u,v,w$ in $X$ with $(u,v)\neq (0,0)$ and $w\neq0$ is defined as
$$
Q(s)x = w(s\langle x,u\rangle-\langle x,v\rangle), \qquad x\in X.
$$
Here $\langle \cdot\,,\cdot\rangle$ stands for the Hilbert space scalar product
in $X$. Then a straight-forward application of  Theorem~\ref{t:LaLucila}
(see also Theorem \ref{grosserWurf}) gives for $\lambda \in \mathbb C$
\begin{equation}\label{lindo}
\left|\dim\frac{\mathcal{L}_{\lambda}^{n+1}(Z+Q)}{\mathcal{L}_{\lambda}^n(Z+Q)}-
\dim\frac{\mathcal{L}_{\lambda}^{n+1}(Z)}{\mathcal{L}_{\lambda}^{n}(Z)}\right|\le n+1,
\end{equation}
if $\frac{\mathcal{L}_{\lambda}^{n+1}(Z)}{\mathcal{L}_{\lambda}^{n}(Z)}$ is of finite dimension. The estimate in \eqref{lindo} seems to be new for operator pencils.
Moreover, in this setting also essential spectrum may exist. We are not going into
details here, but the above setting also allows to treat the essential spectrum. We refer
to \cite{GMPST20} for related considerations.
\end{ex}

\begin{rem}
In the following we present estimates for so-called {\it Wong sequences}, which have their origin in~\cite{Wong74}. Recently, Wong sequences have been
used to prove the Kronecker canonical form,  see~\cite{BergIlch12, BergTren12, BergTren13}. For $E,F\in\C^{d\times d}$ the Wong sequence of the second kind of the pencil $P(s) := sE-F$ is defined as the sequence of subspaces
$(\mathcal W_i(P))_{i\in\N}$ given by
\begin{align*}
\mathcal W_0(P) & = \{0\},\quad &\mathcal W_{i+1}(P) & =
\left\{x\in\mathbb C^d : Ex\in F\mathcal W_i(P)\right\},\quad i\in\N\cup\{0\}.
\end{align*}
It is easily seen by induction that for $n\in\N$ we have
$$
\calW_n(P) = 
N\big((F^{-1}E)^n\big).
$$
Theorem \ref{t:LaLucila} now yields the following statements on the behavior of the Wong sequences of the second kind under rank-one perturbations of the type \eqref{stoerung}:
\begin{itemize}
\item[\rm (i)] If both pencils $P$ and $P+Q$ are
regular, then 
	\begin{align*}
	 \left|\dim\frac{\mathcal W_{n+1}(P+Q)}{\mathcal{W}_n(P+Q)}-
\dim\frac{\mathcal{W}_{n+1}(P)}{\mathcal{W}_{n}(P)}\right|
\leq 1.
	\end{align*}
\item[\rm (ii)]
 If $P$ is regular but $P+Q$ is singular, then
	\begin{align*}
-1-n \leq  \dim\frac{\mathcal{W}_{n+1}(P+Q)}{\mathcal{W}_n(P+Q)}-
\dim\frac{\mathcal{W}_{n+1}(P)}{\mathcal{W}_{n}(P)}
\leq 1.
	\end{align*}
\item[\rm (iii)]
 If $P$ is singular and $P+Q$ is regular, then
	\begin{align*}
-1 \leq  \dim\frac{\mathcal{W}_{n+1}(P+Q)}{\mathcal{W}_n(P+Q)}-
\dim\frac{\mathcal{W}_{n+1}(P)}{\mathcal{W}_{n}(P)}
\leq n+1.
	\end{align*}
\item[\rm (iv)]
 If both $P$ and $P+Q$ are singular, then
\begin{align*}
\left|\dim\frac{\mathcal{W}_{n+1}(P+Q)}{\mathcal{W}_n(P+Q)}-
\dim\frac{\mathcal{W}_{n+1}(P)}{\mathcal{W}_{n}(P)}\right|
\leq n+1.
\end{align*}
\end{itemize}
\end{rem}

\section{Perturbations of the Kronecker canonical form}\label{KCF}

Recall that every pencil $P(s)=sE-F$ can be transformed into the \textit{Kronecker canonical form}, see e.g.~\cite{BergTren12,BergTren13,G59}. To introduce this form, define for
$k\in\N$ the matrices
\[
N_k:=\left[\begin{array}{cccc}
0&&&\\
1&0&&\\
&\ddots&\ddots&\\
&&1&0
\end{array}
\right]\in\C^{k\times k},
\]
and for a multi-index  $\alpha=(\alpha_1,\ldots,\alpha_l)\in\N^l$, $l\geq 1$, with absolute value $|\alpha|=\sum_{i=1}^l\alpha_i$  let
\[
N_{\alpha}:=\diag(N_{\alpha_1},\ldots,N_{\alpha_l})\in\C^{|\alpha|\times |\alpha|}.
\]

If $k\ge 1$, the following rectangular matrices are defined as
\[
K_k:=\left[\begin{array}{cccc}
1 & 0 &&\\
& \ddots& \ddots&\\
&&1&0
\end{array}
\right],\quad
L_k:=\left[\begin{array}{cccc}
0 & 1 &&\\
& \ddots& \ddots&\\
&&0&1
\end{array}
\right]\in\C^{k\times (k+1)},
\]
and, if $k=0$,
\[
    K_0 = L_0 := 0_{0\times 1}.
\]
If $E,F\in \C^{d\times d}$, the expression $0_{0\times 1}$ means that there is a $0$-column $(0,\ldots,0)^\top\in\C^{d\times 1}$ in the matrix~\eqref{kcf} below,
and $0_{0\times 1}^\top$ means that there is a $0$-row $(0,\ldots,0)\in\C^{1\times d}$ in \eqref{kcf} at the corresponding block.
The notation $0_{0\times 1}$ indicates that there is no contribution to the number of rows in \eqref{kcf}, whereas $0_{0\times 1}^\top$
gives no contribution to the number of columns.
For a multi-index $\veps=(\veps_1,\ldots,\veps_l)\in(\N\cup\{0\})^l$ we define
\begin{align*}
K_{\veps}:=\diag(K_{\veps_1},\ldots,K_{\veps_l}),~ L_{\veps}:=\diag(L_{\veps_1},\ldots,L_{\veps_l})\in\C^{|\veps|\times(|\veps|+l)}.
\end{align*}

According to Kronecker~\cite{K90},
there exist invertible matrices $S,T\in\C^{d\times d}$ such that $S(sE-F)T$ has a block diagonal form
\begin{align}
\label{kcf}
\begin{bmatrix} sI_{n_0}-A_0& 0&0&0 \\0& sN_\alpha-I_{|\alpha|}&0&0\\ 0&0& sK_{\veps}-L_{\veps} &0\\ 0&0&0& sK_{\eta}^\top-L_{\eta}^\top\end{bmatrix}
\end{align}
for some $A_0\in\C^{n_0\times n_0}$ in Jordan canonical form, which is unique up to a permutation of its Jordan blocks, and multi-indices $\alpha\in\N^{n_\alpha}$, $\veps\in(\N\cup\{0\})^{n_\veps}$, $\eta\in(\N\cup\{0\})^{n_{\eta}}$ which are unique up to a permutation of their entries, see also \cite[Chapter XII]{G59} or \cite{KM06}.
Let
 $(\sigma_1(\la),\ldots,\sigma_r(\la))$ denote the sizes of the Jordan blocks in a non-increasing order associated to an eigenvalue $\la$ of $A_0$.
These numbers are also called the {\em Segre characteristic} of the eigenvalue $\la$ of $A_0$.
The numbers
 $\alpha_i$, $i=1,\ldots,n_\alpha$ are called the {\em infinite elementary divisors of $P(s)$}, the numbers
 $\veps_i$, $i=1,\ldots,n_\veps$ are called the {\em column minimal indices of $P(s)$}, and the numbers $\eta_j$, $j=1,\dots,n_\eta$ are known as the {\em row minimal indices of $P(s)$}, see e.g.~\cite{DD07,G59}. It is assumed that they are indexed in non-increasing order,
 i.e.\
\begin{equation}\label{pehquh}
\alpha_1\geq\ldots\geq\alpha_{n_\alpha}\geq 1, \quad
\veps_1\geq\ldots\geq\veps_{n_\veps}\geq 0 \quad \text{and} \quad \eta_1\geq \ldots\geq \eta_{n_\eta}\geq 0.
\end{equation}
The sequences of numbers in \eqref{pehquh} are also called the  {\em Segre characteristics} of the infinite elementary divisors, the column minimal indices and the row minimal indices of the
pencil $P(s)$. Note that the Segre characteristic in \cite{DD07} was defined in a slightly different
way, namely without the numbers stemming from the minimal indices.

For $\la\in\C$ the {\em Weyr characteristic} of $A_0$ is defined for each $j\in\N$ as
\begin{equation}\label{WeyrWeyr}
w_j(\la)=\#\{i: \sigma_i(\la)\geq j\},~ j=1,\ldots,\sigma_1, \quad w_j(\la)=0, ~j>\sigma_1,
\end{equation}
i.e., $w_j(\la)$ is the number of Jordan blocks of size at least $j$ of the eigenvalue $\la$ of $A_0$ . If $\la$ is not an eigenvalue of $A_0$ we define  $w_j(\la)=0, ~j\in\N$. Note that
$$
w_j(\la)=\dim \frac{N((A_0-\la)^j)}{N((A_0-\la)^{j-1})}.
$$
In the same way, the {\em Weyr characteristics} of the infinite elementary divisors, the column minimal indices
and the row minimal indices are defined as the conjugate partitions of $\alpha$, of $\veps$, and of $\eta$.
E.g., if $\veps_1\geq\ldots\geq \veps_{n_\veps}\geq 0$ are the column minimal indices of $P(s)$, then
\begin{equation}\label{WeyrWeyrWeyr}
\Delta_j:=\#\{i:\ \veps_i\geq j\}, \qquad j=0,\ldots,\veps_{1}, \quad \Delta_j=0,~j>\veps_1,
\end{equation}
is the {\em Weyr characteristic} of the column minimal indices of $P(s)$
i.e.\ $\Delta_j$ is the number of column minimal indices of $P(s)$ which are larger than or equal to $j$.
The finite sequences $(\Delta_1, \ldots, \Delta_{\veps_1})$ and $(\veps_1,\ldots, \veps_{n_\veps})$ are conjugate partitions of $|\veps|$.
Note that the Segre characteristics can be easily derived from the Weyr characteristics. For a detailed exposition of the Weyr characteristic of matrices we refer to \cite{Sha99}.

If the Kronecker canonical form of $P$ is given by \eqref{kcf}, then
\[
\rank(P)=d-n_\veps=d-n_\eta,
\]
i.e.\ the rank of a pencil is related to the number of column and row minimal indices of $P$.
In what follows we will investigate the behavior of the
Kronecker canonical form under perturbations. In
 \cite{DD07} the unperturbed pencil  $P$ has no full rank, and the perturbation $Q$ is a pencil such that
\[
\rank(P+Q)=\rank(P) + \rank(Q).
\]
This set of perturbations is \emph{generic} in the sense that
it is open and dense in the set of pencils with given size and rank.
For such perturbations $Q$ it is shown that the number and the dimensions
of the Jordan blocks associated to an eigenvalue increase under perturbations of the above form.

The Theorem \ref{grosserWurf} can be interpreted in terms of the Kronecker invariants.


\begin{thm} \label{MarseilleMain}
Given a matrix pencil $P(s)=sE-F$ in Kronecker canonical form~\eqref{kcf}, assume that $\la\in\C$ is an eigenvalue of $P$. Then
\begin{equation}\label{Hauptergebnis}
\mathcal L_{\lambda}^j(P) = N\big((A_0-\lambda)^j\big) \oplus \{0\} \oplus N\big((K_\veps^{-1}L_\veps-\lambda)^{j}\big)\oplus \{0\},
\end{equation}
where the first $\{0\}$ in \eqref{Hauptergebnis} is in $\mathbb C^{|\alpha|}$ and the last $\{0\}$
is in $\mathbb C^{|\eta|}$.
\end{thm}

Note that in Theorem \ref{MarseilleMain} $K_\veps^{-1}L_\veps$ has to be interpreted as a linear relation.

\begin{proof}[Proof of Theorem \rmref{MarseilleMain}] First, assume that $(x_n, \ldots ,x_{0})$ in $\mathbb C^d$ is a Jordan chain at $\lambda\in\C$ for the matrix pencil $P$. According to Lemma~\ref{Marseille0} it is no restriction to assume $\lambda =0$.
Since $N_\alpha$ is in block diagonal form it is assumed without restriction
that $\alpha$ has only one entry. Let  $\veps$ and $\eta$ be multi-indices with $k$ and $l$ zeros.
We decompose the vectors $x_n, \ldots ,x_{0}$ according to the decomposition
$\C^d=\mathbb C^{n_0+\alpha + (|\veps|+n_\veps) +|\eta|}=
\mathbb C^{n_0}\oplus \mathbb C^{\alpha}\oplus \mathbb C^{|\veps|+n_\veps}\oplus \mathbb C^{|\eta|}$
corresponding to the Kronecker canonical form,
\begin{equation}\label{Wolfine}
x_j=(x_{j,1}\ x_{j,2}\ x_{j,3}\ x_{j,4})^\top \in \mathbb C^{n_0+\alpha+(|\veps|+n_\veps)+|\eta|}
\quad \mbox{for } j=0, \ldots, n.
\end{equation}

Consider  the third entry of~\eqref{Wolfine}.  By~\eqref{pehquh}, $\veps$ has the form
$$
\veps=(\veps_{1},\ldots, \veps_{n_\veps-k},0, \ldots, 0),
$$
where  $\veps_j\geq1$ for $j=1,\ldots, \veps_{n_\veps-k}$.
Then $K_\veps$ and $L_\veps$ are of the form
\[
K_\veps=\left[\begin{array}{ccccccc}
 K_{\veps_{1}} & & & & 0 & \cdots & 0\\
 & K_{\veps_{2}}& & &\vdots &&\vdots\\
&&\ddots& &\vdots &&\vdots \\
& & & K_{\veps_{n_\veps-k}}& 0 & \cdots & 0
\end{array}
\right]
\]
and
\[
L_\veps=\left[\begin{array}{ccccccc}
 L_{\veps_{1}} & & & & 0 & \cdots & 0\\
 & L_{\veps_{2}}& & &\vdots &&\vdots\\
&&\ddots& &\vdots &&\vdots \\
& & & L_{\veps_{n_\veps-k}}& 0 & \cdots & 0
\end{array}
\right]
\]
where the last $k$ columns in $K_\veps$ and in $L_\veps$ consist of zeros only. Hence, for $i\in \mathbb N$,
$$
N\big((K_\veps^{-1}L_\veps)^{i}\big) = \left(\bigoplus_{j=1}^{n_\veps-k}
N\big((K_{\veps_j}^{-1}L_{\veps_j})^{i}\big)\right)\oplus \mathbb C^k,
$$
and for the third entry of~\eqref{Wolfine} one finds
$$
x_{j,3} \in N\big((K_\veps^{-1}L_\veps)^{j+1}\big) \qquad \text{for $j=0,\ldots,n$}.
$$
This shows that it is sufficient to consider the case that $n_\veps=1$,

Now the fourth entry of~\eqref{Wolfine} is considered. By~\eqref{pehquh}, $\eta$ has the form
$$
\eta=(\eta_{1},\ldots, \eta_{n_\eta-l},0, \ldots, 0),
$$
where  $\eta_j\geq1$ for $j=1,\ldots, n_\eta-l$.
Thus $K_\eta^\top$ and $L_\eta^\top$ are of the form
\[
K_\eta^\top=\left[\begin{array}{ccc}
 K_{\eta_{1}}^\top &&\\
&\ddots&\\
&& K_{\eta_{n_\eta-l}}^\top\\
0&\hdots& 0\\
\vdots& & \vdots\\
0&\hdots& 0
\end{array}
\right],\quad
L_\eta^\top=\left[\begin{array}{ccc}
  L_{\eta_{1}}^\top &&\\
&\ddots&\\
&& L_{\eta_{n_\eta-l}}^\top\\
0&\hdots& 0\\
\vdots& & \vdots\\
0&\hdots& 0
\end{array}
\right],
\]
where the last $l$ rows in $K_\eta^\top$ and in $L_\eta^\top$ consist of zeros only.
In order to show that the vectors $x_{0,4}, \ldots, x_{n,4}$ are zero, it remains to consider the case that $n_\eta=1$.

The considerations above have shown that we can restrict us to the case that $n_\alpha=n_\veps=n_\eta=1$.
Hence, in~\eqref{kcf} we have
$\alpha, \veps, \eta \in \mathbb N$ with
$$
\alpha \geq 1,\quad \veps\geq 1, \mbox{ and } \eta\geq 1.
$$
Since  $(x_n, \ldots ,x_{0})$ is a Jordan chain of $P$ at $\la=0$, the following equations are satisfied for $j=1,\ldots,n$:
\begin{eqnarray}\label{unno}
 A_0x_{0,1}=0,&  & A_0x_{j,1}=x_{j-1,1}, \\\label{doss}
 I_{\alpha}x_{0,2}=0, && I_{\alpha}x_{j,2}= N_\alpha x_{j-1,2},\\\label{tress}
 L_\veps x_{0,3}=0, && L_\veps x_{j,3}= K_\veps x_{j-1,3},\\\label{qutto}
 L_\eta^\top x_{0,4}=0, && L_\eta^\top x_{j,4}= K_\eta^\top x_{j-1,4}.
\end{eqnarray}
Thus, by \eqref{doss}, the vectors $x_{0,2}, \ldots, x_{n,2}$ are zero. Similarly, by  \eqref{qutto},
the vectors $x_{0,4}, \ldots, x_{n,4}$ are zero. Equation \eqref{unno} shows that
$(x_{n,1}, \ldots ,x_{0,1})$ is a Jordan chain at zero for the matrix $A_0$. Finally,
\eqref{tress} for $j=0,\ldots, n$ gives
$$
x_{j,3} \in N\big((K_\veps^{-1}L_\veps)^{j+1}\big).
$$
This shows that every vector in the chain $(x_n, \ldots ,x_{0})$ is an element in the right hand-side of~\eqref{Hauptergebnis}. Therefore, $\calL_\la^n(P)$ is contained in the right hand-side of~\eqref{Hauptergebnis}.


\medskip

Conversely, if $x_n$ is an element in the right hand-side of~\eqref{Hauptergebnis}
for $j=n+1$, and decomposing $x_n$ as in \eqref{Wolfine}, we have
  $x_n=(x_{n,1}\ 0\ x_{n,3}\ 0)^\top$ with
  $$
  x_{n,1}  \in N\big(A_0^{n+1}\big) \quad \mbox{and} \quad
    x_{n,3}  \in N\big((K_\veps^{-1}L_\veps)^{n+1}\big).
  $$
  Therefore, for each $i=0,\ldots,n-1$ there exist vectors $x_{i,1}$ and $x_{i,3}$ which satisfy equations
  \eqref{unno} and \eqref{tress}. For $i=0,\ldots, n-1$, set
  $$
  x_i:=(x_{i,1}\ 0\ x_{i,3}\ 0)^\top.
  $$
  From this, it is easy to see that
  $(x_n, \ldots ,x_{0})$ is a Jordan chain (at $\la=0$) for the matrix pencil $P$. In particular, $x_n\in \calL_\la^{n+1}(P)$.
\end{proof}

Using the above result, we present an alternative version of Theorem \ref{grosserWurf} in terms of the Weyr characteristics of the Kronecker canonical form. For simplicity, we state
it here only for finite eigenvalues $\la$. A similar statement can be shown for
$\la = \infty$ applying Corollary \ref{infinito}.

\begin{thm}\label{grosserWurf2}
Let $\lambda \in \mathbb C$ and $n\in\N\cup\{0\}$.
Given $P(s)=sE-F$ in $\C^{d\times d}$, let $Q$ be a rank-one matrix pencil as in \eqref{stoerung}.
Assume that $A_0$ and $\widetilde{A_0}$ are the matrices in Jordan canonical form appearing in the Kronecker canonical forms \eqref{kcf} of $P$ and $P+Q$,
denote by $w_n(\la)$ and $\widetilde{w}_n(\la)$ the Weyr characteristics
of the matrices  $A_0$ and $\widetilde{A}_0$, according to \eqref{WeyrWeyr}
and by $\Delta_n$ and $\widetilde{\Delta}_n$ the Weyr characteristics of the column minimal indices of $P$ and $P+Q$ according to \eqref{WeyrWeyrWeyr}. Then the following statements hold:
\begin{itemize}
\item[\rm (i)] If both pencils $P$ and $P+Q$ are regular, then $\Delta_n=\widetilde{\Delta}_n=0$ and
	\begin{align*}
	\left|\widetilde{w}_{n+1}(\la)-w_{n+1}(\la)\right| \leq 1.
	\end{align*}
\item[\rm (ii)]
 If $P$ is regular and $P+Q$ is singular, then $\Delta_n=0$ and
	\begin{align*}
-1-n \leq\widetilde{w}_{n+1}(\la)-w_{n+1}(\la) \leq 1 + \widetilde{\Delta}_n.
	\end{align*}
\item[\rm (iii)]
 If $P$ is singular and $P+Q$ is regular, then $\widetilde{\Delta}_n=0$ and
	\begin{align*}
-1 -\Delta_n \leq   \widetilde{w}_{n+1}(\la)-w_{n+1}(\la)\leq n+1.
	\end{align*}
\item[\rm (iv)]
 If both $P$ and $P+Q$ are singular, then
	\begin{align*}
\left|\widetilde{w}_{n+1}(\la)-w_{n+1}(\la) + \widetilde{\Delta}_n -\Delta_n  \right| \leq n+1.
	\end{align*}
\end{itemize}
\end{thm}

\begin{proof}
Note that if $S$ and $T$ are invertible matrices and $(x_n,\ldots, x_0)$  is a Jordan chain of some pencil $P(s)$, the Definition~\ref{LA} immediately implies that
 $(T^{-1}x_n,\ldots, T^{-1}x_0)$ is a Jordan chain of the pencil $\hat P(s)=SP(s)T$. Hence $\dim {\mathcal{L}_{\la}^n(P)}=\dim {\mathcal{L}_{\la}^n(\hat P)}$ for all
$n\in\N\cup\{0\}$.
According to Lemma~\ref{Marseille0}, if $\la\in\C$ we may assume $\lambda = 0$.
As a consequence of Theorem~\ref{MarseilleMain},
$$
\dim\frac{\mathcal{L}_{0}^{n+1}(P)}{\mathcal{L}_{0}^n(P)}=
\dim\frac{N (A_0^{n+1})}{N(A_0^n)}
+\dim \frac{N(K_\veps^{-1}L_\veps)^{n+1}}{N(K_\veps^{-1}L_\veps)^{n}},
$$
and it is straightforward to see that
$$
\dim \frac{N(K_\veps^{-1}L_\veps)^{n+1}}{N(K_\veps^{-1}L_\veps)^{n}} = \Delta_n.
$$
The same holds for the pencil $P+Q$, and we obtain
$$
\dim\frac{\mathcal{L}_{0}^{n+1}(P+Q)}{\mathcal{L}_{0}^n(P+Q)}=
\dim\frac{N (\widetilde{A}_0^{n+1})}{N(\widetilde{A}_0^n)}
+\widetilde{\Delta}_n.
$$
Then, the result follows immediately from Theorem \ref{grosserWurf}.
\end{proof}

Finally, we compare the above result with Section 4 in \cite{DD07}. A particular case of Lemma 4.2 in \cite{DD07} can be restated in the following way. Given a matrix pencil $P(s)$  in $\C^{d\times d}$ with $\rank(P)=r$, assume that $\la$ is an eigenvalue of $P$ with partial multiplicities $0\leq m_1\leq\ldots\leq m_r$. Let $Q(s)$ be a matrix pencil in $\C^{d\times d}$ with $\rank(Q)=1$ and let $m$ be the partial multiplicity of $\la$ relative to $Q$ ($m$ can also be zero). If $\rank(P+Q)=r+1$ and $m_i<m\leq m_{i+1}$ for some $i=0,1,\ldots,r$ (where $m_0=-1$ and $m_{r+1}=\infty)$, then the partial multiplicities $0\leq m'_1\leq \ldots \leq m'_{r+1}$ of $\la$ relative to $P+Q$ satisfy
\begin{equation}\label{DTD}
m'_1=m_1,\ \ldots,\ m'_i=m_i, \quad m'_{i+1}\geq m, \quad m'_{i+2}\geq m_{i+1},\ \ldots,\ m'_{r+1}\geq m_r.
\end{equation}


Given matrix pencils $P$ and $Q$ as in Theorem \ref{grosserWurf2}, in order to satisfy the conditions of item (iii) it is necessary that $\rank(P)=d-1$ and $\rank(P+Q)=d$. Hence, the hypothesis $\rank(P+Q)=\rank(P) + \rank(Q)$ is fulfilled and Lemma 4.2 in \cite{DD07} provides a better estimate. Reversing the roles of $P$ and $P+Q$, the same happens with item (ii).

However, if both $P$ and $P+Q$ are singular the result in item (iv) holds, independently of the hypothesis $\rank(P+Q)=\rank(P) + \rank(Q)$. Therefore, Theorem \ref{grosserWurf2} gives new information in case that $\rank(P+Q)\neq\rank(P) + \rank(Q)$.

\medskip\medskip
\noindent
{\bf Acknowledgment}
\\\\
The authors would like to thank the reviewers for their careful reading of the manuscript and for several comments which helped to improve this work.

\end{document}